\documentclass[a4paper,12pt]{article}
\usepackage{amsmath}
\usepackage{amsthm}
\usepackage{amssymb}
\usepackage{enumerate}
\usepackage{url}
\usepackage[utf8]{inputenc}
\usepackage{comment}

\title{Stochastic analysis of Beckner's and related functional inequalities}

\author{Yuu Hariya\thanks{Supported in part by JSPS KAKENHI Grant Number 22K03330}}

\date{\empty}
\numberwithin{equation}{section}

\theoremstyle{plain}

\newtheorem{thm}{Theorem}[section]
\newtheorem{lem}[thm]{Lemma}
\newtheorem{prop}[thm]{Proposition}

\theoremstyle{definition}

\theoremstyle{remark}
\newtheorem{rem}[thm]{Remark}

\begin{document}

\def\N {\mathbb{N}}
\def\R {\mathbb{R}}
\def\Q {\mathbb{Q}}

\def\F {\mathcal{F}}

\def\al {\alpha }
\def\la {\lambda }
\def\ve {\varepsilon}
\def\Om {\Omega}

\def\ga {\gamma }

\newcommand\ND{\newcommand}
\newcommand\RD{\renewcommand}

\ND\lref[1]{Lemma~\ref{#1}}
\ND\tref[1]{Theorem~\ref{#1}}
\ND\pref[1]{Proposition~\ref{#1}}
\ND\sref[1]{Section~\ref{#1}}
\ND\ssref[1]{Subsection~\ref{#1}}
\ND\aref[1]{Appendix~\ref{#1}}
\ND\rref[1]{Remark~\ref{#1}} 
\ND\cref[1]{Corollary~\ref{#1}}
\ND\eref[1]{Example~\ref{#1}}
\ND\fref[1]{Fig.\ {#1} }
\ND\lsref[1]{Lemmas~\ref{#1}}
\ND\tsref[1]{Theorems~\ref{#1}}
\ND\dref[1]{Definition~\ref{#1}}
\ND\psref[1]{Propositions~\ref{#1}}
\ND\rsref[1]{Remarks~\ref{#1}}
\ND\sssref[1]{Subsections~\ref{#1}}

\ND\pr{\mathbb{P}}
\ND\ex{\mathbb{E}}
\ND\br{W}

\ND\gss[1]{\gamma_{#1}}
\ND\norm[2]{\left\|{#1}\right\|_{#2}}
\ND\lp[2]{L^{#1}({#2})}
\ND\calB{\mathcal{B}}
\ND\vp{\varphi}
\ND\U{U}
\ND\V{V}
\ND\dv{\theta}
\ND\D{d}
\ND\Ga{\Gamma}
\ND\ccp{C^{\infty }_{b,\mathrm{const}+}(\R ^{\D })}
\ND\ba{\mathbf{a}}
\ND\gd{\gamma_{\D}}
\ND\fp{F}
\ND\tu{\widetilde{U}}
\ND\nua{\nu}
\ND\nub{\nu '}
\ND\tv{\widetilde{V}}

\ND{\rmid}[1]{\mathrel{}\middle#1\mathrel{}}

\def\thefootnote{{}}

\maketitle 
\begin{abstract}
Beckner's inequality is a family of inequalities that interpolates the two fundamental 
functional inequalities, the logarithmic Sobolev and Poincar\'e's inequalities. It is 
parametrized by exponent $p\in (1,2]$ and it implies the logarithmic Sobolev inequality 
as $p\to 1$ and agrees with Poincar\'e's inequality when $p=2$. In this paper, employing a 
stochastic method, we prove an improvement of Beckner's inequality under the 
Gaussian measure when $4/3\le p<2$; in particular, when $p=3/2$, the error bound  
is expressed in terms of the entropy functional. A similar reasoning to the derivation 
of the improvement also enables us to obtain a H\"older-type inequality that holds 
among the entropy, variance and related functionals. 
\footnote{Mathematical Institute, Tohoku University, Aoba-ku, Sendai 980-8578, Japan}
\footnote{{\itshape Key Words and Phrases}: {Beckner's inequality}; {entropy}; {Gaussian measure}; {Brownian motion}; {It\^o's formula}.}
\footnote{2020 {\itshape Mathematical Subject Classification}:~Primary {39B62}; Secondary {60H30}.}
\end{abstract}

\section{Introduction}\label{;intro}
Given a positive integer $\D $, let $\gss{\D }$ denote the standard Gaussian 
measure on $(\R ^{\D },\calB (\R ^{\D }))$ with $\calB(\R ^{\D })$ the Borel 
$\sigma $-field on $\R ^{\D }$: 
\begin{align*}
 \frac{d\gd }{dx}=\frac{1}{\sqrt{(2\pi )^{\D }}}\exp \!\left( 
 -\frac{|x|^{2}}{2}
 \right) ,\quad x\in \R ,
\end{align*}
where $dx$ is the Lebesgue measure on $(\R ^{d},\calB (\R ^{\D }))$.
Here and in the sequel, for every $x,y\in \R ^{\D }$, we write 
$x\cdot y$ for the usual inner product in $\R ^{\D }$ and 
$|x|$ for the Euclidean norm of $x$: $|x|=\sqrt{x\cdot x}$. 
For every $p>0$, let 
\begin{align*}
 \lp{p}{\gss{\D }}:=
 \left\{ 
 f\colon \R ^{\D }\to \R ;\,\text{$f$ is measurable and satisfies }
 \int _{\R ^{\D }}|f|^{p}\,d\gss{\D }<\infty 
 \right\} ,
\end{align*}
and set 
\begin{align*}
 \norm{f}{p}:=\left\{ 
 \int _{\R ^{\D }}|f|^{p}\,d\gss{\D }
 \right\} ^{1/p},\quad f\in \lp{p}{\gd }.
\end{align*}
(Although $\norm{\,\cdot \,}{p}$ is not a norm for $p<1$, we abuse the 
common notation in that case.) Let $C^{\infty }_{b}(\R ^{\D })$ be 
the class of real-valued bounded 
$C^{\infty }$-functions on $\R ^{\D }$ with bounded derivatives of 
any orders. In what follows, for simplicity, we restrict ourselves to 
the class $\ccp $ of functions $f$ such that each $f$ is given by 
the sum of a nonnegative function in $C^{\infty }_{b}(\R ^{\D })$ 
and a positive constant. For every $p>0$, we set the functional 
$I_{p}$ on $\ccp $ by 
\begin{align*}
 I_{p}(f):=
 \begin{cases}
 \displaystyle \frac{\norm{f}{p}^{p}-\norm{f}{1}^{p}}{p-1} & \text{for $p\neq 1$},\\[3mm]
 \displaystyle \int _{\R ^{\D }}f\log \frac{f}{\norm{f}{1}}\,d\gss{\D } & 
 \text{for $p=1$}.
 \end{cases}
\end{align*}
By Jensen's inequality, each $I_{p}$ is a nonnegative functional and 
it is readily seen that $I_{1}(f)=\lim _{p\to 1}I_{p}(f)$ for all $f\in \ccp $.
The quantity $I_{1}(f)$ is the (unnormalized) entropy of the measure 
$f\,dx$ relative to $\gd $ and $I_{2}(f)$ is the variance of $f$ under 
$\gd $: 
\begin{align*}
 I_{2}(f)=\int _{\R ^{\D }}\left( 
 f-\int _{\R ^{\D }}f\,d\gss{\D }
 \right) ^{2}d\gss{\D }.
\end{align*}
In those two cases when $p=1$ and $p=2$, it is known that 
the above functional satisfies 
\begin{align}\label{;lsip}
 I_{1}(f)\le \frac{1}{2}\norm{f^{-1}|\nabla f|^{2}}{1}, && 
 I_{2}(f)\le \norm{|\nabla f|}{2}^{2},
\end{align}
valid for any $f\in \ccp $. Here $\nabla f$ is the gradient of $f$. 
These two inequalities are the logarithmic Sobolev inequality and 
Poincar\'e's inequality, respectively. Beckner's inequality refers to a family 
of inequalities that interpolates the above two inequalities in such a way that, 
for $1<p\le 2$, 
\begin{align}\label{;beck}
 I_{p}(f)\le \frac{p}{2}\norm{f^{p-2}|\nabla f|^{2}}{1},\quad 
 f\in \ccp .
\end{align}
We note that the original formulation of Beckner's inequality \cite{bec} 
is different from \eqref{;beck} but it is readily seen that they are 
equivalent. For the logarithmic Sobolev, Poincar\'e's and Beckner's 
inequalities, we refer to a comprehensive monograph \cite{bgl}.

In this paper, by a stochastic argument, we first observe that, when 
$p=4/3$, Beckner's inequality \eqref{;beck} is improved as 
\begin{align}\label{;im43}
 I_{4/3}(f)+\frac{1}{3}I_{2/3}(f)^{2}\le \frac{2}{3}\norm{f^{-2/3}|\nabla f|^{2}}{1}; 
\end{align}
that is, it holds that 
\begin{align*}
 \norm{f}{4/3}^{4/3}-\norm{f}{1}^{4/3}
 +\left( 
 \norm{f}{1}^{2/3}-\norm{f}{2/3}^{2/3}
 \right) ^{2}\le \frac{2}{9}\norm{f^{-2/3}|\nabla f|^{2}}{1}
\end{align*}
for any $f\in \ccp $. We then ask a natural question of what about other 
values of $p$. In this paper, we give an answer to the question when 
$4/3<p<2$. We show that, for every $4/3<p<2$, there exists a 
nonnegative function $\fp \equiv \fp _{p}$ on $[0,\infty )\times [0,\infty )$, 
which is positive on $(0,\infty )\times [0,\infty )$, such that 
\begin{align}\label{;imp}
 I_{p}(f)+\frac{p(p-1)(2-p)}{4}\fp \!\left( 
 \frac{I_{2p-2}(f)}{p-1},\norm{f}{2p-2}
 \right) \le \frac{p}{2}\norm{f^{p-2}|\nabla f|^{2}}{1}
\end{align}
for any $f\in \ccp $. A precise description of our function 
$\fp (s,x),\,s,x\ge 0$, is given in \ssref{;sscp}; in particular, the function is 
increasing in $s$ and decreasing in $x$, and admits the 
following asymptotics for large values of $s$:
\begin{align}\label{;fasym}
 \lim _{s\to \infty }
 \frac{\fp (s,x)}{s^{\frac{p}{2(p-1)}}}
 =\frac{2^{\frac{3p-2}{2(p-1)}}(p-1)^{\frac{2-p}{p-1}}}{\sqrt{\pi }p(2-p)}
 \Gamma \!\left( 
 \frac{1}{2(p-1)}
 \right) ,
\end{align}
where $\Gamma \colon (0,\infty )\to (0,\infty )$ is the gamma function and 
$x\ge 0$ is arbitrary. Note that we may regard \eqref{;im43} as the case 
\begin{align}\label{;fp43}
 \fp (s,x)=\frac{1}{2}s^{2},\quad s,x\ge 0,
\end{align}
in \eqref{;imp}. When $p=3/2$, inequality~\eqref{;imp} reads 
\begin{align*}
 I_{3/2}(f)+\frac{3}{32}\fp \!\left( 
 2I_{1}(f),\norm{f}{1}
 \right) \le \frac{3}{4}\norm{f^{-1/2}|\nabla f|^{2}}{1},
\end{align*}
which connects $I_{3/2}$ with the entropy functional $I_{1}$ in the 
framework of Beckner's inequality; as far as we know, this is the 
first paper that reports such a phenomenon. For improvement of 
Beckner's inequality of different nature valid for all $1<p<2$, we refer to 
Ivanisvili--Volberg \cite{iv2}.

Our method used to derive the above improvement is stochastic, and a 
similar reasoning also enables us to obtain the following inequality of the 
H\"older type for functionals $I_{p}$, which is new to the best of our 
knowledge: for every pair $0<p<q$, it holds that 
\begin{align}\label{;hoel}
 I_{p}(f)\le \frac{\Gamma (1+1/q)}{\Gamma (1+1/q-p/q)}q^{p/q}I_{q}(f)^{p/q}
\end{align}
for any $f\in \ccp $. For example, if we apply the above inequality to the pair 
$(p,q)=(1,2)$, then we have 
\begin{align*}
 I_{1}(f)\le \sqrt{\frac{\pi }{2}}I_{2}(f)^{1/2},
\end{align*}
and hence from Poincar\'e's inequality in \eqref{;lsip}, we have the following 
upper bound on the entropy functional $I_{1}$: 
\begin{align*}
 I_{1}(f)\le \sqrt{\frac{\pi }{2}}\norm{|\nabla f|}{2}. 
\end{align*}
If we take $(p,q)=(1,3/2)$, then, appealing to 
inequality~\thetag{1.25} in \cite{iv1} by Ivanisvili--Volberg, we also 
see that 
\begin{align*}
 I_{1}(f)\le \frac{2^{5/3}\pi }{3^{5/6}\Gamma (1/3)}\norm{|\nabla f|}{3/2}
\end{align*}
by the fact that $\Gamma (2/3)\Gamma (1/3)=2\pi /\sqrt{3}$ 
(see \eqref{;g1} below). If we take $q=1$, then we have the following 
lower bound on $I_{1}$ for any $0<p<1$:
\begin{align*}
 \Gamma (1-p)^{1/p}\bigl( 
 \norm{f}{1}^{p}-\norm{f}{p}^{p}
 \bigr) ^{1/p}\le I_{1}(f).
\end{align*}
Additionally, we remark that the constant 
in front of $I_{q}(f)^{p/q}$ in \eqref{;hoel} is bounded for every fixed $p$; 
indeed, if we let $C(p,q)$ denote it, then 
\begin{align*}
 \lim _{q\downarrow p}C(p,q)=\lim _{q\to \infty }C(p,q)=1.
\end{align*}

We give an outline of the paper. In \sref{;spre}, we state a stochastic 
framework we will work in, and prove two preliminary lemmas that 
will be referred to throughout the paper. 
Inequalities~\eqref{;im43} and \eqref{;imp} are proven in 
\pref{;p43} and \tref{;tp}, respectively, in \sref{;sibi}, which will be 
closed with observation on the case $1<p<4/3$. 
We prove inequality~\eqref{;hoel} in \tref{;th} in \sref{;shii}. 
The paper is concluded with \sref{;scr}, in which we explain 
how the stochastic argument employed in this paper also applies 
to an improvement of Poincar\'e's inequality obtained by 
Goldstein--Nourdin--Peccati \cite{gnp}. 

\section{Preliminaries}\label{;spre}
 We adopt the notation used in \cite{har} by the author. 
Let the triplet $(\Om, \F ,\pr )$ be a probability space on which a 
$\D $-dimensional standard Brownian motion $\br =\{ \br _{t}\} _{0\le t\le 1}$ 
is defined so that $\pr \circ \br _{1}^{-1}=\gss{\D }$. Let $\{ \F _{t}\} _{0\le t\le 1}$ 
denote the (augmentation of) the natural filtration of $\br $. 
Given an arbitrary $f\in \ccp $, set the process 
$M=\{ M_{t}\} _{0\le t\le 1}$ by 
\begin{align*}
 M_{t}=\ex \!\left[ 
 f(\br _{1-t}+x)
 \right] \!\big| _{x=\br _{t}}. 
\end{align*}
Here $\ex $ stands for the expectation with respect to $\pr $. 
In what follows, unless otherwise specified, we fix $f$. 
By the Markov property of $\br $, each random variable $M_{t}$ is a version of 
the conditional expectation 
$
 \ex \left[ 
 f(\br _{1})\rmid|\F _{t}
 \right] 
$, 
and hence $M$ is an $\{ \F _{t}\} $-martingale; moreover, by the continuity and 
boundedness of $f$, it also determines a continuous process. Let 
an $\R ^{\D }$-valued process $\dv =\{ \dv _{t}=(\dv ^{i}_{t})_{i=1}^{\D }\} _{0\le t\le 1}$ 
be defined by 
\begin{align*}
 \dv ^{i}_{t}=\ex \!\left[ 
 \partial _{i}f(\br _{1-t}+x)
 \right] \!\big| _{x=\br _{t}},\quad i=1,\ldots ,\D .
\end{align*}
Here we have used the shorthand notation that  
\begin{align*}
 \partial _{i}f(x)=\frac{\partial f}{\partial x_{i}}(x)
\end{align*}
for $x=(x_{1},\ldots ,x_{\D })\in \R ^{\D }$. The same as $M$, every component 
of $\dv $ is a continuous $\{ \F _{t}\} $-martingale. 
Observe that, for every $i=1,\ldots ,\D $, 
by the boundedness of $\partial _{i}f$, 
\begin{align*}
 \ex \!\left[ 
 \partial _{i}f(\br _{1-t}+x)
 \right] &=\partial _{i}\ex \!\left[ 
 f(\br _{1-t}+x)
 \right] 
\end{align*}
for any $0\le t\le 1$ and $x\in \R ^{\D }$.
Then, by a simple application 
of It\^o's formula, the two processes $M$ and $\dv $ are 
related via 
\begin{align}
 M_{t}=\ex \!\left[ f(\br _{1})\right] +\int _{0}^{t}\dv _{s}\cdot d\br _{s},\quad 
 &0\le t\le 1, \label{;reldv}
\end{align}
$\pr $-a.s., which relation may also be deduced from the Clark--Ocone formula; 
see \cite[Section~2]{har} for details. Note that $M_{0}=\norm{f}{1}$ by the 
positivity of $f$. With relation~\eqref{;reldv} at disposal, the following lemma is also an 
immediate consequence of It\^o's formula: 

\begin{lem}\label{;lpre1}
Let $\vp \colon (0,\infty )\to \R $ be a $C^{2}$-function. Then, for every 
$0\le t\le 1$, it holds that 
\begin{align*}
 \ex [\vp (M_{t})]-\vp (M_{0})
 =\frac{1}{2}\int _{0}^{t}ds\,\ex \!\left[ 
 \vp ''(M_{s})|\dv _{s}|^{2}
 \right] .
\end{align*}
\end{lem}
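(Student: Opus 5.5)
The plan is to apply It\^o's formula to $\vp (M_{t})$, using the stochastic integral representation \eqref{;reldv} of $M$. The representation says that $M$ is a continuous semimartingale with $dM_{s}=\dv _{s}\cdot d\br _{s}$, so its quadratic variation is $d\langle M\rangle _{s}=|\dv _{s}|^{2}\,ds$. It\^o's formula will then give, for $0\le t\le 1$,
\begin{align*}
 \vp (M_{t})-\vp (M_{0})=\int _{0}^{t}\vp '(M_{s})\dv _{s}\cdot d\br _{s}
 +\frac{1}{2}\int _{0}^{t}\vp ''(M_{s})|\dv _{s}|^{2}\,ds .
\end{align*}
To apply it we need $M$ to stay in the domain $(0,\infty )$ of $\vp $. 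This holds because $f\in \ccp $ is bounded below by a positive constant $c$ and bounded above by some constant $C$. Hence $c\le M_{s}\le C$ for all $s$, almost surely.

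The next step is to take expectations and show that the stochastic integral term has mean zero. On the compact interval $[c,C]$ the $C^{2}$ function $\vp $ has $\vp '$ and $\vp ''$ bounded. Moreover, $|\dv _{s}|\le \sup |\nabla f|<\infty $ because the derivatives of $f$ are bounded. So the integrand $\vp '(M_{s})\dv _{s}$ is bounded and progressively measurable, and the stochastic integral is a true, square-integrable martingale starting at $0$; its expectation vanishes. For the same reason the $ds$-integrand is bounded, so Fubini's theorem lets us exchange $\ex $ and $\int _{0}^{t}ds$. The final point is that $M_{0}=\norm{f}{1}$ is deterministic, which gives $\ex [\vp (M_{0})]=\vp (M_{0})$. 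These facts together yield the claimed identity.

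The only step needing care is the justification of the martingale property, namely the boundedness of $M$ away from $0$ and $\infty $ together with the boundedness of $\dv $. Both follow directly from the definition of the class $\ccp $, so I expect no real obstacle.
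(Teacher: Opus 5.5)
Your proposal is correct and matches the paper's proof step for step: It\^o's formula is applied via \eqref{;reldv}, the lower bound on $f\in\ccp$ keeps $M$ in $(0,\infty)$, the bounded integrand makes the stochastic integral a true martingale, and Fubini's theorem is justified by the boundedness of $\vp''(M_s)|\dv_s|^2$. You also spell out the upper bound on $M$ and the bound $|\dv_s|\le\sup|\nabla f|$, which the paper leaves implicit.
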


\begin{proof}
By the fact that 
\begin{align*}
 M_{t}\ge \inf _{x\in \R ^{\D }}f(x)>0,\quad 0\le t\le 1,
\end{align*}
$\pr $-a.s., we may apply It\^o's formula to the process 
$\{ \vp (M_{t})\} _{0\le t\le 1}$ to get 
\begin{align*}
 \vp (M_{t})-\vp (M_{0})
 =\int _{0}^{t}\vp '(M_{s})\dv _{s}\cdot d\br _{s}
 +\frac{1}{2}\int _{0}^{t}\vp ''(M_{s})|\dv _{s}|^{2}\,ds,\quad 
 0\le t\le 1,
\end{align*}
$\pr $-a.s., in view of \eqref{;reldv}. Notice that, as $f\in \ccp $, 
the stochastic integral above 
gives rise to a true martingale because its integrand is a bounded process. 
Therefore, taking the expectation on both sides at time $t$, we have the claim. 
Here we have applied Fubini's theorem to the right-hand side thanks to the 
boundedness of the process $\{ \vp ''(M_{t})|\dv _{t}|^{2}\} _{0\le t\le 1}$.
\end{proof}

Let $\Phi $ be a $C^{4}$-function on $(0,\infty )$ such that 
\begin{align}\label{;acvx}
 \Phi ''(x)>0\quad \text{for all $x>0$}.
\end{align}
Thanks to the assumption on $f$, and as was seen in the proof of the 
previous lemma, both sides of the claimed equality in the lemma below 
also make sense; we do not repeat such a remark afterwards.

\begin{lem}\label{;lpre2}
Let $\Phi \colon (0,\infty )\to \R $ be as above. Then, for every $0\le s\le t\le 1$, 
it holds that 
\begin{equation}\label{;lpre2q}
\begin{split}
 &\ex \!\left[ 
 \Phi ''(M_{t})|\dv _{t}|^{2}
 \right] 
 -\ex \!\left[ 
 \Phi ''(M_{s})|\dv _{s}|^{2}
 \right] \\
 &=\ex \!\left[ 
 \frac{1}{\Phi ''(M_{t})}\left| 
 \Phi ''(M_{t})\dv _{t}-\Phi ''(M_{s})\dv _{s}
 \right| ^{2}
 \right] \\
 &\quad -\frac{1}{2}\ex \!\left[ 
 \Phi ''(M_{s})^{2}|\dv _{s}|^{2}\int _{s}^{t}du\,\left( 
 \frac{1}{\Phi ''}
 \right) ''\!(M_{u})|\dv _{u}|^{2}
 \right] .
\end{split}
\end{equation}
\end{lem}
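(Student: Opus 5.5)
The plan is to expand the square on the right-hand side of \eqref{;lpre2q} and evaluate each resulting term by conditioning on $\F _{s}$. Two facts do the work: the martingale property of $\dv $, and a conditional version of \lref{;lpre1} applied to $\psi :=1/\Phi ''$. By \eqref{;acvx} and the $C^{4}$ assumption on $\Phi $, $\psi $ is a $C^{2}$-function on $(0,\infty )$.

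Expanding the square pointwise gives
\begin{align*}
 \frac{1}{\Phi ''(M_{t})}\left| \Phi ''(M_{t})\dv _{t}-\Phi ''(M_{s})\dv _{s}\right| ^{2}
 =\Phi ''(M_{t})|\dv _{t}|^{2}-2\Phi ''(M_{s})\dv _{s}\cdot \dv _{t}
 +\Phi ''(M_{s})^{2}|\dv _{s}|^{2}\psi (M_{t}).
\end{align*}
All three terms are integrable. Indeed, $M$ takes values in the compact interval $[\inf f,\sup f]\subset (0,\infty )$, and $\dv $ is bounded by the boundedness of $\nabla f$. So every function of $M$ appearing here is bounded, and we may take expectations term by term.

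\textbf{Middle term.} Since $\Phi ''(M_{s})\dv _{s}$ is $\F _{s}$-measurable and each $\dv ^{i}$ is a martingale, conditioning on $\F _{s}$ gives
\begin{align*}
 \ex \bigl[ \Phi ''(M_{s})\dv _{s}\cdot \dv _{t}\bigr] =\ex \bigl[ \Phi ''(M_{s})|\dv _{s}|^{2}\bigr] .
\end{align*}

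\textbf{Last term.} Apply It\^o's formula to $\psi (M_{u})$ on $[s,t]$, exactly as in the proof of \lref{;lpre1}. This gives
\begin{align*}
 \psi (M_{t})=\psi (M_{s})+\int _{s}^{t}\psi '(M_{u})\dv _{u}\cdot d\br _{u}
 +\frac{1}{2}\int _{s}^{t}\psi ''(M_{u})|\dv _{u}|^{2}\,du .
\end{align*}
The stochastic integral has a bounded integrand, so it is a true martingale and its conditional expectation given $\F _{s}$ vanishes. Multiplying by the bounded $\F _{s}$-measurable factor $\Phi ''(M_{s})^{2}|\dv _{s}|^{2}$ and taking expectations, the stochastic integral therefore drops out. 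Using $\Phi ''\psi =1$, we obtain
\begin{align*}
 \ex \bigl[ \Phi ''(M_{s})^{2}|\dv _{s}|^{2}\psi (M_{t})\bigr]
 =\ex \bigl[ \Phi ''(M_{s})|\dv _{s}|^{2}\bigr]
 +\frac{1}{2}\ex \!\left[ \Phi ''(M_{s})^{2}|\dv _{s}|^{2}\int _{s}^{t}du\,\psi ''(M_{u})|\dv _{u}|^{2}\right] .
\end{align*}

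\textbf{Combining.} Adding the three expectations, the coefficients of $\ex [\Phi ''(M_{s})|\dv _{s}|^{2}]$ are $-2$ from the middle term and $+1$ from the last term, so they sum to $-1$. Hence
\begin{align*}
 &\ex \!\left[ \frac{1}{\Phi ''(M_{t})}\left| \Phi ''(M_{t})\dv _{t}-\Phi ''(M_{s})\dv _{s}\right| ^{2}\right] \\
 &\quad =\ex \bigl[ \Phi ''(M_{t})|\dv _{t}|^{2}\bigr] -\ex \bigl[ \Phi ''(M_{s})|\dv _{s}|^{2}\bigr]
 +\frac{1}{2}\ex \!\left[ \Phi ''(M_{s})^{2}|\dv _{s}|^{2}\int _{s}^{t}du\,\psi ''(M_{u})|\dv _{u}|^{2}\right] .
\end{align*}
Moving the last term to the other side is exactly \eqref{;lpre2q}.

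The only step that is not pure algebra is the treatment of the last term. It needs the conditional (time-$s$-started) form of \lref{;lpre1} with a random $\F _{s}$-measurable weight, together with Fubini's theorem to interchange expectation and the $du$-integral. Both are justified by the boundedness of $M$, $M^{-1}$ and $\dv $, and by the $C^{2}$-regularity of $\psi $.
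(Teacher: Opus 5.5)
Your proposal is correct and is essentially the paper's proof. You expand the square, use the martingale property of $\dv$ for the cross term, and evaluate $\ex[\Phi''(M_{s})^{2}|\dv_{s}|^{2}/\Phi''(M_{t})]$ by conditioning on $\F_{s}$ and applying It\^o's formula to $1/\Phi''(M_{u})$ on $[s,t]$. The paper only splits this into two steps, first isolating the remainder $\ex[\Phi''(M_{s})|\dv_{s}|^{2}\{1-\Phi''(M_{s})/\Phi''(M_{t})\}]$ and then computing it, whereas you combine everything at once.
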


\begin{proof}
First we verify that the left-hand side of the claimed equality is equal to 
\begin{align*}
 \ex \!\left[ 
 \frac{1}{\Phi ''(M_{t})}\left| 
 \Phi ''(M_{t})\dv _{t}-\Phi ''(M_{s})\dv _{s}
 \right| ^{2}
 \right] +\ex \!\left[ 
 \Phi ''(M_{s})|\dv _{s}|^{2}\left\{ 
 1-\frac{\Phi ''(M_{s})}{\Phi ''(M_{t})}
 \right\} 
 \right] .
\end{align*}
By developing the square in the first term, the above expression coincides with 
\begin{align*}
 \ex \!\left[ 
 \Phi ''(M_{t})|\dv _{t}|^{2}
 \right] 
 -2\ex \!\left[ 
 \Phi ''(M_{s})\dv _{t}\cdot \dv _{s}
 \right] 
 +\ex \!\left[ 
 \Phi ''(M_{s})|\dv _{s}|^{2}
 \right] .
\end{align*}
Since 
\begin{align*}
 \ex \!\left[ 
 \Phi ''(M_{s})\dv _{t}\cdot \dv _{s}
 \right] 
 &=\ex \!\left[ 
 \Phi ''(M_{s})\dv _{s}\cdot \ex \!\left[ 
 \dv _{t}\rmid| \F _{s}
 \right] 
 \right] \\
 &=\ex \!\left[ 
 \Phi ''(M_{s})|\dv _{s}|^{2}
 \right] 
\end{align*}
by the fact that each component of $\dv $ is a martingale, the conclusion 
follows.

It now remains to show that 
\begin{equation}\label{;plpre2q1}
\begin{split}
 &\ex \!\left[ 
 \Phi ''(M_{s})|\dv _{s}|^{2}\left\{ 
 1-\frac{\Phi ''(M_{s})}{\Phi ''(M_{t})}
 \right\} 
 \right] \\
 &=-\frac{1}{2}\ex \!\left[ 
 \Phi ''(M_{s})^{2}|\dv _{s}|^{2}\int _{s}^{t}du\left( 
 \frac{1}{\Phi ''}
 \right) ''\!(M_{u})|\dv _{u}|^{2}
 \right] .
\end{split}
\end{equation}
To this end, observe that we may replace in the left-hand side the 
reciprocal of $\Phi ''(M_{t})$ by 
\begin{align*}
 \ex \!\left[ 
 \frac{1}{\Phi ''(M_{t})}\rmid| \F _{s}
 \right] .
\end{align*}
Since 
\begin{align*}
 \frac{1}{\Phi ''(M_{t})}-\frac{1}{\Phi ''(M_{s})}
 =\int _{s}^{t}\left( \frac{1}{\Phi ''}
 \right) '\!(M_{u})\dv _{u}\cdot d\br _{u}
 +\frac{1}{2}\int _{s}^{t}\left( \frac{1}{\Phi ''}
 \right) ''\!(M_{u})|\dv _{u}|^{2}\,du,
\end{align*}
$\pr $-a.s., by It\^o's formula, the above conditional expectation is 
equal to 
\begin{align*}
 \frac{1}{\Phi ''(M_{s})}
 +\frac{1}{2}\ex \!\left[ 
 \int _{s}^{t}\left( \frac{1}{\Phi ''}
 \right) ''\!(M_{u})|\dv _{u}|^{2}\,du\rmid| \F_{s}
 \right] ,
\end{align*}
$\pr $-a.s., from which \eqref{;plpre2q1} follows readily.
\end{proof}

\begin{rem}\label{;rphi}
In addition to the convexity assumption~\eqref{;acvx} on $\Phi $, if we assume 
that $1/\Phi ''$ is concave on $(0,\infty )$, then the above lemma entails 
in particular that 
\begin{align*}
 \ex \!\left[ 
 \Phi ''(M_{t})|\dv _{t}|^{2}
 \right] 
 -\ex \!\left[ 
 \Phi ''(M_{s})|\dv _{s}|^{2}
 \right] \ge 0
\end{align*}
for all $0\le s\le t\le 1$; in fact, since the function 
\begin{align*}
 (0,\infty )\times \R ^{\D }\ni (x,y)\mapsto \Phi ''(x)|y|^{2}
\end{align*}
is convex by assumption, the process $\{ \Phi ''(M_{t})|\dv _{t}|^{2}\} _{0\le t\le 1}$ 
is a submartingale by the multidimensional conditional Jensen's inequality. 
Therefore, taking $t=1$ and integrating the left-hand side of the above inequality 
with respect to $s$ over $[0,1]$, we have 
\begin{align*}
 \int _{0}^{1}ds\,\ex \!\left[ 
 \Phi ''(M_{s})|\dv _{s}|^{2}
 \right] 
 \le \ex \!\left[ 
 \Phi ''(M_{1})|\dv _{1}|^{2}
 \right] .
\end{align*}
Combining the last inequality with \lref{;lpre1} yields 
\begin{align*}
 \ex \!\left[ 
 \Phi (M_{1})
 \right] -\Phi (M_{0})\le \frac{1}{2}\ex \!\left[ 
 \Phi ''(M_{1})|\dv _{1}|^{2}
 \right] ,
\end{align*}
namely, 
\begin{align}\label{;phi}
 \int _{\R ^{\D }}\Phi (f)\,d\gd -\Phi \!\left( 
 \int _{\R ^{\D }}f\,d\gd 
 \right) 
 \le \frac{1}{2}\int _{\R ^{\D }}\Phi ''(f)|\nabla f|^{2}\,d\gd .
\end{align}
The left-hand side is referred to as the \emph{$\Phi $-entropy} of $f$, and 
the above inequality is called the $\Phi $-entropy inequality. 
The logarithmic Sobolev inequality and Poincar\'e's inequality in \eqref{;lsip} 
correspond to the cases $\Phi (x)=x\log x$ and $\Phi (x)=x^{2}$, respectively, 
and Beckner's inequality~\eqref{;beck} to the case $\Phi (x)=x^{p}\ (1<p\le 2)$.
For $\Phi $-entropy inequalities, we refer to \cite[Section~7.6.1]{bgl}. 
We remark that the idea of proving the logarithmic Sobolev inequality stochastically 
in the Gaussian setting based on the Clark--Ocone formula goes back to \cite{chl}. 
We also remark that we have from \lref{;lpre2} the following error bound in 
\eqref{;phi}: 
\begin{align*}
 &\frac{1}{2}\int _{\R ^{\D }}\Phi ''(f)|\nabla f|^{2}\,d\gd 
 -\int _{\R ^{\D }}\Phi (f)\,d\gd +\Phi \!\left( 
 \int _{\R ^{\D }}f\,d\gd 
 \right) \\
 &\ge \frac{1}{2}\int _{0}^{1}ds\,
 \ex \!\left[ 
 \frac{1}{\Phi ''(M_{1})}\left| 
 \Phi ''(M_{1})\dv _{1}-\Phi ''(M_{s})\dv _{s}
 \right| ^{2}
 \right] ,
\end{align*}
which holds true for every $C^{2}$-function $\Phi $ fulfilling \eqref{;acvx} 
and the concavity assumption on $1/\Phi ''$ as observed in \cite[Section~3.2]{har}. 
The present paper is concerned with estimates on the second term on the 
right-hand side of \eqref{;lpre2q} for specific choices of $\Phi $; 
see the next section.
\end{rem}

\section{Improvement of Beckner's inequality}\label{;sibi}
 
We fix an arbitrary $f\in \ccp $ and keep the same notation as in \sref{;spre}. 
Let $1<p<2$. We may and do apply both of the two lemmas in the previous 
section to the function 
\begin{align*}
 x^{p},\quad x>0,
\end{align*}
with $t=1$, which in particular results in the inequality 
\begin{equation}\label{;rslti}
\begin{split}
 &\frac{\ex [M_{1}^{p}]-M_{0}^{p}}{p-1}
 +\frac{p(p-1)(2-p)}{4}\int _{0}^{1}dt\,\ex \!\left[ 
 M_{t}^{-p}|\dv _{t}|^{2}\int _{0}^{t}ds\,M_{s}^{2p-4}|\dv _{s}|^{2}
 \right] \\
 &\le \frac{p}{2}\ex [M_{1}^{p-2}|\dv _{1}|^{2}].
\end{split}
\end{equation}
Here the second term on the left-hand side follows from \lref{;lpre2} by observing 
that 
\begin{align*}
 &-\frac{1}{2}\int _{0}^{1}ds\,\ex \!\left[ 
 \Phi ''(M_{s})^{2}|\dv _{s}|^{2}\int _{s}^{1}du\left( 
 \frac{1}{\Phi ''}
 \right) ''\!(M_{u})|\dv _{u}|^{2}
 \right] \\
 &=\frac{p(p-1)^{2}(2-p)}{2}\int _{0}^{1}ds\,\ex \!\left[ 
 M_{s}^{2p-4}|\dv _{s}|^{2}\int _{s}^{1}du\,M_{u}^{-p}|\dv _{u}|^{2}
 \right] ,
\end{align*}
and then by Fubini's theorem.

\subsection{Case $p=4/3$}\label{;ssc43}

In this subsection, we prove 
\begin{prop}\label{;p43}
Inequality~\eqref{;im43} holds for any $f\in \ccp $.
\end{prop}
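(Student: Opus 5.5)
The plan is to start from inequality~\eqref{;rslti} with $p=4/3$ and show that the double-integral term on its left-hand side is at least $\frac{1}{3}I_{2/3}(f)^{2}$. The key observation is that at $p=4/3$ the two exponents $-p$ and $2p-4$ coincide: both equal $-4/3$. This makes the double integral a perfect square.

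Concretely, I would introduce the nondecreasing, adapted, bounded process
\begin{align*}
 A_{t}:=\int _{0}^{t}M_{s}^{-4/3}|\dv _{s}|^{2}\,ds,\quad 0\le t\le 1 .
\end{align*}
With this notation the inner integral in \eqref{;rslti} is $A_{t}$, and the outer integrand is $dA_{t}/dt$. Pathwise we therefore get
\begin{align*}
 \int _{0}^{1}dt\,M_{t}^{-4/3}|\dv _{t}|^{2}A_{t}=\int _{0}^{1}A_{t}\,dA_{t}=\frac{1}{2}A_{1}^{2}.
\end{align*}
The constant in front is $\frac{p(p-1)(2-p)}{4}=\frac{2}{27}$, so the second term on the left of \eqref{;rslti} equals $\frac{1}{27}\ex [A_{1}^{2}]$. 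By Jensen's (or Cauchy--Schwarz's) inequality this is at least $\frac{1}{27}(\ex [A_{1}])^{2}$.

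Next I would identify $\ex [A_{1}]$ via \lref{;lpre1} applied to $\vp (x)=x^{2/3}$. Since $\vp ''(x)=-\frac{2}{9}x^{-4/3}$, the lemma with $t=1$ gives
\begin{align*}
 \ex [M_{1}^{2/3}]-M_{0}^{2/3}=-\frac{1}{9}\ex [A_{1}].
\end{align*}
Using $M_{1}=f(\br _{1})$, $\pr \circ \br _{1}^{-1}=\gd $ and $M_{0}=\norm{f}{1}$, this reads
\begin{align*}
 \ex [A_{1}]=9\bigl( \norm{f}{1}^{2/3}-\norm{f}{2/3}^{2/3}\bigr) =3I_{2/3}(f).
\end{align*}
Hence the correction term is at least $\frac{1}{27}\cdot 9I_{2/3}(f)^{2}=\frac{1}{3}I_{2/3}(f)^{2}$.

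Finally, the other terms of \eqref{;rslti} translate into the functional form. Since $M_{1}=f(\br _{1})$ and $\dv _{1}=\nabla f(\br _{1})$, we have $\frac{\ex [M_{1}^{4/3}]-M_{0}^{4/3}}{1/3}=I_{4/3}(f)$, and the right-hand side becomes $\frac{2}{3}\norm{f^{-2/3}|\nabla f|^{2}}{1}$. Together with the bound on the correction term, this yields \eqref{;im43}.

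There is no real obstacle here. The only points needing care are recognizing the exact-derivative structure $A_{t}\,dA_{t}$ and matching the constants. Integrability is not an issue, because $M$ is bounded below by $\inf f>0$ and $\dv $ is bounded, so $A_{1}$ is bounded.
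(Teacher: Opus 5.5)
Your proposal is correct and follows essentially the same route as the paper. Your pathwise identity $\int_0^1 A_t\,dA_t=\frac12 A_1^2$ is the paper's Fubini symmetrization of the double integral, which is available because $-p=2p-4=-4/3$. Jensen's inequality and Lemma~\ref{;lpre1} with $\varphi(x)=x^{2/3}$ then give $\mathbb{E}[A_1]=3I_{2/3}(f)$, and the constants combine as $\frac{2}{27}\cdot\frac12\cdot 9=\frac13$.
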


\begin{proof}
It suffices to prove that the second term on the left-hand side of \eqref{;rslti} 
is bounded from below by $(1/3)I_{2/3}(f)^{2}$ when $p=4/3$. To this end, observe that 
\begin{align*}
 -p=2p-4=-\frac{4}{3},
\end{align*}
and hence, by Fubini's theorem, 
\begin{align*}
 \int _{0}^{1}dt\,\ex \!\left[ 
 M_{t}^{-p}|\dv _{t}|^{2}\int _{0}^{t}ds\,M_{s}^{2p-4}|\dv _{s}|^{2}
 \right] 
 &=\frac{1}{2}\ex \!\left[ 
 \left( 
 \int _{0}^{1}dt\,M_{t}^{-4/3}|\dv _{t}|^{2}
 \right) ^{2}
 \right] \\
 &\ge \frac{1}{2}\left( 
 \int _{0}^{1}dt\,\ex \!\left[ 
 M_{t}^{-4/3}|\dv _{t}|^{2}
 \right] 
 \right) ^{2}.
\end{align*}
Here we have used Jensen's inequality and Fubini's theorem 
for the second line. By taking $\vp (x)=x^{2/3},\,x>0$, therein, 
we see from \lref{;lpre1} that 
\begin{align*}
 \int _{0}^{1}dt\,\ex [M_{t}^{-4/3}|\dv _{t}|^{2}]
 &=\frac{2}{ \frac{2}{3}(\frac{2}{3}-1)}\left( 
 \ex [M_{1}^{2/3}]-M_{0}^{2/3}
 \right) \\
 &=3I_{2/3}(f).
\end{align*}
Combining these proves that $(1/3)I_{2/3}(f)^{2}$ does not exceed the second term on 
the left-hand side of \eqref{;rslti} for $p=4/3$ and ends the proof of the proposition.
\end{proof}

\subsection{Case $4/3<p<2$}\label{;sscp}

We turn to the case $4/3<p<2$. Set the function 
$\phi \colon [0,\infty )\to (0,\infty )$ by 
\begin{align}\label{;defphi}
 \phi (v):=\int _{0}^{\infty }d\la \,\la e^{-\la }\exp \!\left( 
 -\frac{v}{2}\la ^{2}
 \right) ,\quad v\ge 0.
\end{align}
Notice that the function $v^{\frac{p-2}{2(p-1)}}\phi (v),\,v>0,$ 
is integrable over $(0,\infty )$ provided that $4/3<p<2$; indeed, a 
simple application of Fubini's theorem shows that 
\begin{align*}
 \int _{0}^{\infty }dv\,v^{\frac{p-2}{2(p-1)}}\phi (v)
 =2^{\frac{3p-4}{2(p-1)}}\Gamma \!\left( 
 \frac{3p-4}{2(p-1)}
 \right) \!\Gamma \!\left( 
 \frac{2-p}{p-1}
 \right) .
\end{align*}
For every real number $a$, we write $a_{+}$ for $\max \{ a,0\} $. 
Now we define $\fp \colon [0,\infty )\times [0,\infty )\to [0,\infty )$ by 
\begin{align}\label{;deffp}
 \fp (s,x):=C_{p}\int _{0}^{\infty }dv\left\{ 
 (s-x^{2p-2}v)_{+}
 \right\} ^{\frac{p}{2(p-1)}}v^{\frac{p-2}{2(p-1)}}\phi ((p-1)^{2}v),\quad 
 s,x\ge 0,
\end{align}
with $C_{p}$ the constant given by 
\begin{align}\label{;defcp}
 C_{p}=\frac{4(p-1)^{2}}{\pi p(2-p)}\sin \!\left( 
 \frac{\pi }{2}\cdot \frac{2-p}{p-1}
 \right) .
\end{align}
Note that 
\begin{align*}
 \frac{3p-4}{2(p-1)}+\frac{2-p}{2(p-1)}=1,
\end{align*}
and recall the following two relations satisfied by the gamma function 
(see, e.g., \cite[p.~3]{leb}):
\begin{align}
 \Gamma (z)\Gamma (1-z)&=\frac{\pi }{\sin \pi z},\quad 0<z<1, \label{;g1}\\
 2^{2z-1}\Gamma (z)\Gamma \!\left( 
 z+\frac{1}{2}
 \right) &=\sqrt{\pi }\Gamma (2z),\quad z>0. \notag
\end{align}
We apply those two relations with $z=(2-p)/\{ 2(p-1)\} \in (0,1)$ to find that 
\begin{align*}
 C_{p}\int _{0}^{\infty }dv\,v^{\frac{p-2}{2(p-1)}}\phi (v)
 =\frac{2^{\frac{3p-2}{2(p-1)}}(p-1)^{2}}{\sqrt{\pi }p(2-p)}
 \Gamma \!\left( 
 \frac{1}{2(p-1)}
 \right) ,
\end{align*}
which verifies that the function $\fp $ admits the asymptotics~\eqref{;fasym} 
by the dominated convergence theorem. We prove 
\begin{thm}\label{;tp}
For every $4/3<p<2$, inequality~\eqref{;imp} holds for any $f\in \ccp $ 
with $F$ defined by \eqref{;deffp}.
\end{thm}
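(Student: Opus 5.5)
The plan is to bound the second term on the left-hand side of \eqref{;rslti} from below by $\fp (I_{2p-2}(f)/(p-1),\norm{f}{2p-2})$, as in the proof of \pref{;p43}. Set
\begin{align*}
 A_{t}:=M_{t}^{2p-4}|\dv _{t}|^{2},\qquad B_{t}:=\int _{0}^{t}A_{s}\,ds ,\qquad N_{t}:=M_{t}^{p-1}.
\end{align*}
Since $M_{t}^{-p}=M_{t}^{4-3p}M_{t}^{2p-4}$, the double integral in \eqref{;rslti} equals
\begin{align*}
 J:=\ex \!\left[ \int _{0}^{1}M_{t}^{-(3p-4)}B_{t}\,dB_{t}\right] .
\end{align*}
Two facts will be used throughout. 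First, \lref{;lpre1} with $\vp (x)=x^{2p-2}$ gives
\begin{align*}
 \ex [B_{1}]=\frac{2\bigl( \ex [M_{1}^{2p-2}]-M_{0}^{2p-2}\bigr) }{(2p-2)(2p-3)}=\frac{I_{2p-2}(f)}{p-1}=:s .
\end{align*}
Second, by It\^o's formula $N$ is a continuous semimartingale whose quadratic variation is $(p-1)^{2}B_{t}$. This quadratic variation is what produces the argument $(p-1)^{2}v$ of $\phi $ in \eqref{;deffp}.

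The negative power $M_{t}^{-(3p-4)}=N_{t}^{-(3p-4)/(p-1)}$ is the obstacle; at $p=4/3$ it is absent and Jensen sufficed. I would first write this negative power as a Laplace-type integral of $e^{-\la N_{t}}$ in the spirit of \eqref{;defphi}. The Gaussian factor $\exp (-\frac{v}{2}\la ^{2})$ inside $\phi $ strongly suggests then comparing $e^{-\la N_{t}}$ with the exponential martingale $\exp (-\la N_{t}-\frac{\la ^{2}}{2}\langle N\rangle _{t})$, or equivalently performing a Dambis--Dubins--Schwarz time change of the martingale part of $N$. In this way $J$ should be bounded below by an expression depending only on the law of the clock $(p-1)^{2}B$.

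Next I would reparametrize the time integral by the level $v=B_{t}$ of the clock. After Fubini, $J$ should be bounded below by an integral over $v$ of a functional of $(B_{1}-v)_{+}$, weighted by $v^{\frac{p-2}{2(p-1)}}\phi ((p-1)^{2}v)$. The remaining dependence on $M$ at the level $v$ I would control through the supermartingale property of $M^{2p-2}$ (note $2p-2<1$), which gives $\ex [M_{t}^{2p-2}]\ge \norm{f}{2p-2}^{2p-2}$. This is where the combination $x^{2p-2}v$ in \eqref{;deffp} should arise, and it also explains why $\fp $ is decreasing in $x$.

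Finally, the resulting functional of $B_{1}$ should be convex. Jensen's inequality with $\ex [B_{1}]=s$ then yields $\fp (s,x)$, with the constant $C_{p}$ in \eqref{;defcp} coming from the Gamma-function identities \eqref{;g1} used to evaluate the Laplace representation. Multiplying by $p(p-1)(2-p)/4$ and inserting the bound into \eqref{;rslti} gives \eqref{;imp}.

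The main obstacle is the middle step: decoupling the negative power of $M_{t}$ from the clock $B$ so that only the law of $B_{1}$ and $\norm{f}{2p-2}$ remain. I expect this needs a careful choice of the exponent in the Laplace representation so that, after the time change, the powers $\frac{p}{2(p-1)}$ and $\frac{p-2}{2(p-1)}$ of \eqref{;deffp} appear exactly. I am not certain that the comparison with the exponential martingale goes in the needed direction, and if it does not, this step will have to be replaced.
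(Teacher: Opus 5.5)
\textbf{There is a genuine gap: the decisive middle step is missing, and the tools you propose for it point the wrong way.}

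Your reduction to bounding $J=\ex\bigl[\int_0^1 M_t^{4-3p}B_t\,dB_t\bigr]$ from below is correct. Your computation of $\ex[B_1]$ is also correct, though at $p=3/2$ you need $\vp(x)=x\log x$ rather than $x^{2p-2}$.

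\emph{The exponential-martingale comparison goes the wrong way.} Since $4-3p<0$, $M^{4-3p}$ is a convex function of a martingale, hence a submartingale. Likewise $e^{-\la N}$ and $\exp(-\la N_t-\frac{\la^2}{2}\langle N\rangle_t)$ are submartingales, because $N=M^{p-1}$ has the nonpositive drift $\frac12(p-1)(p-2)M^{p-3}|\dv|^2\,dt$. Conditioning arguments with such processes, as in \eqref{;bdvcons}, only give upper bounds, e.g.\ $J\le\frac12\ex[M_1^{4-3p}B_1^2]$.

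Moreover, in the clock $B$ the process $Y=M^{p-1}/(p-1)$ is not a Brownian motion. Its martingale part has quadratic variation $B$ and its drift is $\frac{2\nu+1}{2Y}\,dB$ with $\nu=-1/(2(p-1))$, so $Y$ is a Bessel process of index $\nu$. Hence $e^{-\la y-\la^2 s/2}$ is not space-time harmonic for it.

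\emph{The supermartingale step also goes the wrong way.} $\fp$ is decreasing in $x$. Replacing $\ex[M_t^{2p-2}]$ by the smaller quantity $\|f\|_{2p-2}^{2p-2}$ inside an expression like $(s-x^{2p-2}v)_+$ enlarges it, which gives an upper bound, not a lower one.

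\textbf{The missing idea is that no inequality is needed before the final Jensen step: $J$ satisfies an exact identity.}
\begin{enumerate}
\item The paper chooses $\fp$ to solve $\partial_s\fp\,x^{2p-4}+\frac12\partial_x^2\fp=x^{-p}s$ with $\fp(0,x)=0$ (\lref{;lpdefp}).
\item It\^o's formula applied to $\fp(B_t,M_t)$ then gives $J=\ex[\fp(B_1,M_1)]$ exactly (\pref{;pp}).
\item In the variable $y=x^{p-1}/(p-1)$, this PDE is the backward equation of the Bessel process above. It is solved by superposing that process's space-time harmonic functions $y^{-\nu}K_{\nu}(\la y)e^{-\la^2 s/2}$; these, not exponential martingales, are the right building blocks.
\item \lref{;laltu} identifies this superposition with \eqref{;deffp} by comparing Laplace transforms.
\item Only then is Jensen applied, jointly to the pair $(B_1,M_1^{2p-2})$, using the convexity of $(s,y)\mapsto\{(s-yv)_+\}^{p/(2(p-1))}$.
\end{enumerate}
Because $B_1$ is paired with the terminal value $M_1$, the identity $\ex[M_1^{2p-2}]=\|f\|_{2p-2}^{2p-2}$ holds exactly, and no supermartingale comparison is needed.
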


In view of \eqref{;rslti}, the assertion of the theorem follows once we show the 
following proposition: 

\begin{prop}\label{;pp}
For every $4/3<p<2$, it holds that 
\begin{align}\label{;ppq}
 \int _{0}^{t}ds\,\ex \!\left[ 
 M_{s}^{-p}|\dv _{s}|^{2}\int _{0}^{s}du\,M_{u}^{2p-4}|\dv _{u}|^{2}
 \right] 
 =\ex \!\left[ 
 \fp \!\left( 
 \int _{0}^{t}ds\,M_{s}^{2p-4}|\dv _{s}|^{2},M_{t}
 \right) 
 \right] 
\end{align}
for all $0\le t\le 1$.
\end{prop}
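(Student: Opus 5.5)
The plan is to prove \eqref{;ppq} by It\^o's formula applied to the two-dimensional process $(A_{t},M_{t})$, where
\begin{align*}
 A_{t}:=\int _{0}^{t}ds\,M_{s}^{2p-4}|\dv _{s}|^{2},\quad 0\le t\le 1 .
\end{align*}
$A$ is continuous and of finite variation, with $dA_{t}=M_{t}^{2p-4}|\dv _{t}|^{2}\,dt$. By \eqref{;reldv}, $dM_{t}=\dv _{t}\cdot d\br _{t}$ and $d\langle M\rangle _{t}=|\dv _{t}|^{2}\,dt$.

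Suppose $\fp $ is $C^{1}$ in $s$ and $C^{2}$ in $x$ on $[0,\infty )\times (0,\infty )$. Then It\^o's formula gives
\begin{align*}
 \fp (A_{t},M_{t})-\fp (0,M_{0})
 =\int _{0}^{t}\partial _{x}\fp (A_{s},M_{s})\dv _{s}\cdot d\br _{s}
 +\int _{0}^{t}ds\,|\dv _{s}|^{2}\Bigl\{ M_{s}^{2p-4}\partial _{s}\fp +\frac{1}{2}\partial _{x}^{2}\fp \Bigr\} (A_{s},M_{s}).
\end{align*}
Two facts are immediate. First, $\fp (0,x)=0$, because $(0-x^{2p-2}v)_{+}=0$. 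Second, the stochastic integral is a true martingale and Fubini applies: $f\in \ccp $ makes $M$ bounded and bounded away from $0$, $\dv $ bounded, and hence $A$ bounded.

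Taking expectations, \eqref{;ppq} therefore reduces to the deterministic identity
\begin{align*}
 x^{2p-4}\partial _{s}\fp (s,x)+\frac{1}{2}\partial _{x}^{2}\fp (s,x)=s\,x^{-p},\quad s\ge 0,\ x>0, \tag{$\ast$}
\end{align*}
together with enough regularity of $\fp $ to justify It\^o's formula.

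For regularity, note that the exponent $\al :=p/\{2(p-1)\}$ exceeds $1$ when $p<2$. So $s\mapsto (s-x^{2p-2}v)_{+}^{\al }$ is $C^{1}$, and after the change of variables $w=x^{2p-2}v$ everything is smooth in $x>0$. Differentiation under the integral sign is then justified by the integrability of $v^{\frac{p-2}{2(p-1)}}\phi (v)$ noted above (this is exactly where $p>4/3$ enters), combined with the boundedness of $s$ and $x^{-1}$ on the relevant range.

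To verify $(\ast)$, I would first substitute $w=x^{2p-2}v$:
\begin{align*}
 \fp (s,x)=C_{p}\,x^{-(2p-2)(1+\beta )}\int _{0}^{s}dw\,(s-w)^{\al }w^{\beta }\phi \bigl((p-1)^{2}x^{2-2p}w\bigr),\quad \beta :=\frac{p-2}{2(p-1)} .
\end{align*}
Then I would insert the definition \eqref{;defphi} of $\phi $, writing $\fp $ as a superposition over $\la >0$ (weight $\la e^{-\la }$) and $w\in (0,s)$ of the elementary functions
\begin{align*}
 x^{-(2p-2)(1+\beta )}\exp \bigl(-\tfrac{1}{2}(p-1)^{2}\la ^{2}x^{2-2p}w\bigr).
\end{align*}
Applying $\frac{1}{2}\partial _{x}^{2}$ to each elementary function gives an explicit combination of powers of $x$ times the same exponential. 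The term $x^{2p-4}\partial _{s}$ acts only on the factor $(s-w)^{\al }$. Integrating by parts in $w$ should then transfer that derivative onto $w^{\beta }e^{-c\la ^{2}w}$, and integrating by parts in $\la $ (using the weight $\la e^{-\la }$) should let the $x$-derivative terms cancel against it. This is a Duhamel-type superposition structure: the bulk cancels and only a boundary contribution remains.

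The surviving boundary terms should reduce to $s x^{-p}$ times a Beta/Gamma integral. The constant $C_{p}$ in \eqref{;defcp} is chosen, via the reflection formula \eqref{;g1} with $z=(2-p)/\{2(p-1)\}$, precisely to make this constant equal to $1$.

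The main obstacle is this verification of $(\ast)$: tracking the exponents so that the bulk terms cancel exactly and the boundary terms at $w=0$, $w=s$ and $\la \to 0,\infty $ behave correctly. The boundary at $w=0$ is singular, since $\beta \in (-1,0)$, so that is where the care goes. If the direct integration by parts becomes unwieldy, I would instead compute $\partial _{x}^{2}$ of $x^{-(2p-2)(1+\beta )}\phi (cx^{2-2p}w)$ in closed form. Using the ODE satisfied by $\phi $, namely $\phi (v)+v\phi '(v)=\ldots $, obtained by integrating by parts in $\la $ in \eqref{;defphi}, this gives a total $w$-derivative plus a multiple of $w^{\beta }$, and the latter integrates against $(s-w)^{\al -1}$ to the required $s\,x^{-p}$.
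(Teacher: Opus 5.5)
\paragraph{Verdict.} The proposal correctly reduces the statement to the PDE $(\ast)$ but does not prove $(\ast)$, and the one concrete step offered for it fails.

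\paragraph{The gap.} Your reduction is the paper's proof of this proposition: It\^o's formula for $F(A_t,M_t)$, $F(0,x)=0$, boundedness to make the stochastic integral a true martingale, and the PDE $(\ast)$, which is exactly Lemma~\ref{;lpdefp}. But $(\ast)$ is the whole substance of the statement; the paper needs Lemmas~\ref{;laltu} and~\ref{;lpdefp} for it. You describe strategies for $(\ast)$ but do not prove it.

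The one concrete manipulation you propose fails as stated. Transferring $\partial_s$ from $(s-w)^{\alpha}$ onto $w^{\beta}e^{-\kappa w}$ by parts (with $\kappa=\frac12(p-1)^2\lambda^2x^{2-2p}$) produces two bad terms, because $\beta<0$. The boundary term is $s^{\alpha}w^{\beta}e^{-\kappa w}|_{w=0}=+\infty$, and the new integrand contains the non-integrable $\beta w^{\beta-1}$. So you get $\infty-\infty$. The cancellation of the bulk terms is asserted, not computed. In the fallback, the identity for $\phi$ is left as ``$\phi+v\phi'=\ldots$''. That form does not close in $\phi$, since $(v\phi)'(v)=\frac12\int_0^\infty\lambda^{2}e^{-\lambda-v\lambda^{2}/2}\,d\lambda$.

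\paragraph{How the fallback can be completed.} The fallback does work, and it is then more elementary than the paper's route. The paper identifies $U''$, via Laplace transforms, as a superposition of $\lambda^{\nu+2}K_{\nu}(\lambda)e^{-(p-1)^{2}z\lambda^{2}/2}$ and then uses the Bessel equation. Here is the direct version.

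Integrating $\frac{d}{d\lambda}\{(v\lambda^{2}-\lambda-1)e^{-\lambda-v\lambda^{2}/2}\}$ over $(0,\infty)$ gives the closed relation $2v^{2}\phi'(v)+(1+3v)\phi(v)=1$. Use your $\alpha=\frac{p}{2(p-1)}$ and $\beta=\frac{p-2}{2(p-1)}$, so $\alpha+\beta=1$. By \eqref{;rfpu}, $(\ast)$ is equivalent to
\[
 2(p-1)^{2}z^{2}U''(z)+\{1-(p-1)z\}U'(z)+\tfrac{p(p-1)}{2}U(z)=z,\quad z>0,
\]
where $U(z)=C_{p}\int_{0}^{z}(z-v)^{\alpha}g(v)\,dv$ and $g(v)=v^{\beta}\phi((p-1)^{2}v)$. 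The function $g$ satisfies $2(p-1)^{2}v^{2}g'+\{1+(3-2\beta)(p-1)^{2}v\}g=v^{\beta}$.

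\begin{enumerate}
\item Write $U''=C_{p}\alpha(\alpha-1)\int_{0}^{z}(z-v)^{\alpha-2}g\,dv$, which converges since $\alpha-2\in(-1,0)$.
\item Split $z^{2}=(z-v)^{2}+2v(z-v)+v^{2}$.
\item Integrate by parts only the $v^{2}$-piece, moving the derivative onto $v^{2}g$. This vanishes at $v=0$, so the boundary term is harmless; note this is the opposite direction from yours.
\item Insert the equation for $g$. The coefficients of $\int_{0}^{z}(z-v)^{\alpha}g$, $\int_{0}^{z}(z-v)^{\alpha-1}g$ and $z\int_{0}^{z}(z-v)^{\alpha-1}g$ all vanish identically.
\item What remains is $C_{p}\alpha\int_{0}^{z}(z-v)^{\alpha-1}v^{\beta}\,dv=C_{p}\Gamma(\alpha+1)\Gamma(\beta+1)\,z=z$, by \eqref{;g1}.
\end{enumerate}

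Some computation of this kind, or the paper's, is needed for a proof.
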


The proof of \tref{;tp} is immediate from the above proposition.

\begin{proof}[Proof of \tref{;tp}]
Note that the function 
\begin{align*}
 (y_{+})^{\frac{p}{2(p-1)}},\quad y\in \R ,
\end{align*}
is convex since $p/\{ 2(p-1)\} >1$. Therefore, by the definition~\eqref{;deffp} of 
$\fp $ and Jensen's inequality, the right-hand side of \eqref{;ppq} is bounded 
from below by 
\begin{align*}
 \fp \!\left( 
 \ex \!\left[ 
 \int _{0}^{t}ds\,M_{s}^{2p-4}|\dv _{s}|^{2}
 \right] ,\left\{ \ex [M_{t}^{2p-2}]\right\} ^{\frac{1}{2p-2}}
 \right) .
\end{align*}
We put $t=1$ in \lref{;lpre1} and take $\vp (x)=x^{2p-2}$ for $p\neq 3/2$ and 
$\vp (x)=x\log x$ for $p=3/2$ to see that 
\begin{align}\label{;pppfq}
 \int _{0}^{1}ds\,\ex \!\left[ 
 M_{s}^{2p-4}|\dv _{s}|^{2}
 \right] 
 =\frac{I_{2p-2}(f)}{p-1}.
\end{align}
Consequently, the left-hand side of \eqref{;ppq} is not less than 
\begin{align*}
 \fp \!\left( 
 \frac{I_{2p-2}(f)}{p-1},\norm{f}{2p-2}
 \right) 
\end{align*}
when $t=1$. This proves the theorem owing to \eqref{;rslti}.
\end{proof}

The rest of this section is devoted to the proof of \pref{;pp}, which will be done 
through the two \lsref{;laltu} and \ref{;lpdefp}. Set the function 
$U\colon [0,\infty )\to [0,\infty )$ by 
\begin{align}\label{;defu}
 U(z)=\fp (z,1),\quad z\ge 0.
\end{align}
Observe that 
\begin{align}\label{;rfpu}
 \fp (s,x)=x^{p}U(x^{2-2p}s)
\end{align}
for any $s\ge 0$ and $x>0$. Put 
\begin{align}\label{;defnua}
 \nua :=-\frac{1}{2(p-1)}\in \left( 
 -\frac{3}{2},-\frac{1}{2}
 \right) .
\end{align}
We begin with an alternative integral representation of $U$.

\begin{lem}\label{;laltu}
We have 
\begin{align}\label{;laltuq}
 U(z)=C_{p}'\int _{0}^{z}du\,\int _{0}^{u}dv\,\int _{0}^{\infty }d\la \,\la ^{\nua +2}
 K_{\nua }(\la )\exp \!\left\{ 
 -\frac{(p-1)^{2}v}{2}\la ^{2}
 \right\} 
\end{align}
for all $z\ge 0$. Here the constant $C_{p}'$ is given by 
\begin{align*}
 C_{p}'=\frac{2^{\frac{1}{2(p-1)}}}{\sqrt{\pi }\Gamma \Bigl( 
 \frac{3p-4}{2(p-1)}
 \Bigr) }
\end{align*}
and $K_{\nua }$ is the modified Bessel function of the third kind 
(or Macdonald's function) of order $\nua $.
\end{lem}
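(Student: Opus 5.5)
The plan is to verify \eqref{;laltuq} by showing that both sides vanish together with their first derivatives at $z=0$, and that their second derivatives agree for $z>0$. The second-derivative identity will be checked by comparing Laplace transforms in $z$.

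\emph{Rewriting $U$.} From \eqref{;defu} and \eqref{;deffp},
\begin{align*}
 U(z)=C_{p}\int _{0}^{z}dv\,(z-v)^{\alpha }v^{\beta }\phi ((p-1)^{2}v),\qquad
 \alpha =\frac{p}{2(p-1)}=\frac12-\nua ,\quad \beta =\frac{p-2}{2(p-1)}=\frac12+\nua ,
\end{align*}
so that $\alpha +\beta =1$. Since $\nua \in (-3/2,-1/2)$, we have $\alpha -2\in (-1,0)$ and $\beta \in (-1,0)$. Hence $U$ is a Riemann--Liouville fractional integral of order $\alpha +1$ of $g(v):=v^{\beta }\phi ((p-1)^{2}v)$. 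Differentiating twice under the integral is legitimate because $g$ is locally integrable and $\alpha >1$. This gives $U(0)=U'(0)=0$ and
\begin{align*}
 U''(z)=C_{p}\alpha (\alpha -1)\int _{0}^{z}dv\,(z-v)^{\alpha -2}g(v).
\end{align*}
The right-hand side of \eqref{;laltuq} also vanishes together with its first derivative at $0$. Its second derivative is $C_{p}'h(z)$, where
\begin{align*}
 h(z)=\int _{0}^{\infty }d\la \,\la ^{\nua +2}K_{\nua }(\la )e^{-c z\la ^{2}},\qquad c=\frac{(p-1)^{2}}{2}.
\end{align*}
This integral converges: $K_{\nua }(\la )\sim C\la ^{-|\nua |}$ near $0$, so $\la ^{\nua +2}K_{\nua }(\la )$ behaves like $\la ^{2+2\nua }$ there, which is integrable since $2+2\nua >-1$. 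It therefore suffices to prove $U''=C_{p}'h$ on $(0,\infty )$.

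\emph{Laplace transforms.} Take the Laplace transform in $z$ at $q>0$. Inserting \eqref{;defphi} and using Fubini and the convolution theorem,
\begin{align*}
 \mathcal{L}[U''](q)=C_{p}\alpha (\alpha -1)\Gamma (\alpha -1)\Gamma (\beta +1)\,q^{1-\alpha }\int _{0}^{\infty }d\la \,\la e^{-\la }(q+c\la ^{2})^{-\beta -1}.
\end{align*}
On the other side,
\begin{align*}
 \mathcal{L}[h](q)=\int _{0}^{\infty }d\la \,\frac{\la ^{\nua +2}K_{\nua }(\la )}{q+c\la ^{2}}.
\end{align*}
Note that $\beta +1=\frac32+\nua$ and $1-\alpha =\beta$. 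In addition, $\Gamma (\alpha -1)\Gamma (2-\alpha )=\pi /\sin (\pi (\alpha -1))$ by \eqref{;g1}. This reflection is exactly where the factor $\sin \bigl(\frac{\pi }{2}\cdot \frac{2-p}{p-1}\bigr)$ in \eqref{;defcp} should cancel: one has $\alpha -1=\frac{2-p}{2(p-1)}$.

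\emph{Main obstacle: the Bessel identity.} The key step is to prove
\begin{align*}
 \int _{0}^{\infty }d\la \,\frac{\la ^{\nua +2}K_{\nua }(\la )}{q+c\la ^{2}}
 =\mathrm{const}\cdot q^{\beta }\int _{0}^{\infty }d\la \,\la e^{-\la }(q+c\la ^{2})^{-\nua -3/2}.
\end{align*}
I would first use $K_{\nua }=K_{-\nua }$ with $-\nua >1/2$. Then I would substitute the Poisson-type representation
\begin{align*}
 K_{\mu }(\la )=\frac{\sqrt{\pi }(\la /2)^{\mu }}{\Gamma (\mu +\frac12)}\int _{1}^{\infty }e^{-\la t}(t^{2}-1)^{\mu -1/2}\,dt,\qquad \mu =-\nua ,
\end{align*}
into the left-hand side. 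After this substitution the factor $\la ^{\nua +2}(\la /2)^{-\nua }$ reduces to a power $\la ^{2}$, up to constants. An alternative route, possibly simpler, is to write $(q+c\la ^{2})^{-\nua -3/2}$ as a Gamma integral $\frac{1}{\Gamma (\nua +3/2)}\int _{0}^{\infty }r^{\nua +1/2}e^{-r(q+c\la ^{2})}dr$ on the right-hand side. Then I would use the classical formula $\int _{0}^{\infty }\la ^{\mu +1}e^{-a\la ^{2}}e^{-\la }d\la$ together with the integral representation $K_{\mu }(x)=\frac12(x/2)^{\mu }\int _{0}^{\infty }e^{-s-x^{2}/(4s)}s^{-\mu -1}ds$ to reduce both sides to the same double integral.

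If the $q$-dependence matches, the Laplace transforms coincide up to a constant. Uniqueness of the Laplace transform then gives $U''=\mathrm{const}\cdot h$. The constant is identified with $C_{p}'$ by bookkeeping with the duplication formula, or alternatively by comparing the $z\to \infty $ behaviour using \eqref{;fasym}. I expect the matching of the $q$-dependence in this Bessel step to require the most care.
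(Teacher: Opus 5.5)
Your setup is correct and is the paper's framework in another form. Matching $U''$ with $C_p'h$, after checking that both sides and their first derivatives vanish at $0$, is equivalent to the paper's comparison of the Laplace transforms of $U$ and $\widetilde{U}$. Your constant bookkeeping is also right: with your $\alpha=p/\{2(p-1)\}$ one has $C_p\alpha(\alpha-1)\Gamma(\alpha-1)\Gamma(2-\alpha)=1$ exactly. The gap is that everything after this point is left open, and that is where the content of the lemma lies. With your $c=(p-1)^2/2$, what must be shown is
\begin{align*}
 C_p'\int_0^\infty d\lambda\,\frac{\lambda^{\nu+2}K_\nu(\lambda)}{q+c\lambda^2}
 =q^{\nu+1/2}\int_0^\infty d\lambda\,\lambda e^{-\lambda}\left(q+c\lambda^2\right)^{-\nu-3/2}
 \quad\text{for all } q>0.
\end{align*}
You neither prove this nor settle whether the $q$-dependence matches. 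Your second suggested route does not close as described. The integral $\int_0^\infty\lambda^{\mu+1}e^{-a\lambda^2-\lambda}\,d\lambda$ is not elementary (it is a parabolic cylinder function). Moreover, inserting \eqref{;reprk1} into the left-hand side leaves $q$ only in $(q+c\lambda^2)^{-1}$, so it gives a plain Laplace transform in $q$. After your Gamma-integral step, the right-hand side is $q^{\nu+1/2}$ times a Laplace transform, so nothing matches term by term.

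The missing idea is a rescaling that produces the factor $q^{\nu+1/2}$ on the left as well. The paper (whose Laplace variable plays the role of your $q$) substitutes $\lambda\mapsto\sqrt{q}\,\lambda$ and inserts \eqref{;reprk2d}, obtaining
\begin{align*}
 \int_0^\infty d\lambda\,\frac{\lambda^{\nu+2}K_\nu(\lambda)}{q+c\lambda^2}
 =2^{\nu-1}q^{\nu+1/2}\int_0^\infty dv\,e^{-qv}v^{\nu-1}\int_0^\infty d\lambda\,\frac{\lambda^2}{1+c\lambda^2}\exp\left(-\frac{\lambda^2}{4v}\right).
\end{align*}
Your Gamma-integral step turns the right-hand side into $q^{\nu+1/2}\Gamma(\nu+3/2)^{-1}\int_0^\infty dv\,e^{-qv}v^{\nu+1/2}\phi((p-1)^2v)$. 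The powers of $q$ now cancel, and uniqueness of the Laplace transform reduces the claim to the pointwise identity \eqref{;inr} in $v$, equivalently \eqref{;inrd}. That identity is a separate, nontrivial fact. The paper proves it by computing both sides as $2/v-4v^{-3/2}e^{1/v}\int_{1/\sqrt{v}}^\infty e^{-z^2}dz$. It also follows in one line by inserting $e^{-\lambda}=\pi^{-1/2}\int_0^\infty u^{-1/2}e^{-u-\lambda^2/(4u)}du$ into its right-hand side.

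Your Poisson-representation route can also be completed, but again only after a rescaling you do not mention. With $\lambda=\sqrt{q/c}\,x$, both sides depend on $q$ only through $b=\sqrt{q/c}$. Then use the Poisson formula, the substitution $y=xt$, and the Euler integral $\int_0^1 s^{-\gamma}(1-s)^{\gamma-1}(1+Ys)^{-1}ds=\Gamma(\gamma)\Gamma(1-\gamma)(1+Y)^{\gamma-1}$ with $\gamma=-\nu-1/2$. These turn both sides into $c^{-1}\int_0^\infty y(1+y^2)^{-\nu-3/2}e^{-by}\,dy$: the right-hand side directly, the left-hand side with constants matching exactly for the stated $C_p'$. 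As submitted, though, the proposal stops at the step that carries the whole lemma.
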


For the modified Bessel functions, see, e.g., \cite[Section~5.7]{leb}.

Note that, thanks to Fubini's theorem, the above representation of 
$U$ is rewritten as 
\begin{align*}
 C_{p}'\int _{0}^{\infty }dv\,(z-v)_{+}\int _{0}^{\infty }d\la \,\la ^{\nua +2}
 K_{\nua }(\la )\exp \!\left\{ 
 -\frac{(p-1)^{2}v}{2}\la ^{2}
 \right\} 
\end{align*}
for every $z\ge 0$. The fact that the last expression agrees with 
\eqref{;deffp} with $(s,x)=(z,1)$ therein is of interest in its own right. 
Recall the fact (see, e.g., \cite[Equation~\thetag{5.16.4}]{leb}) that, for any 
$\mu \in \R \backslash \{ 0\} $,  
\begin{align*}
 \lim _{z\downarrow 0}z^{|\mu |}K_{\mu }(z)=2^{|\mu |-1}\Gamma (|\mu |).
\end{align*}
Therefore, since 
\begin{align}\label{;condnua}
 \nua +2-|\nua |=2-\frac{1}{p-1}>-1
\end{align}
for $p>4/3$, one sees that 
\begin{align*}
 \int _{0}^{\ve }d\la \,\la ^{\nua +2}K_{\nua }(\la )<\infty \quad \text{for any }\ve >0.
\end{align*}
In fact, we can say more. Recall the following integral representations of $K_{\mu }$ 
(see, e.g., \cite[Equation~\thetag{5.10.25}]{leb}) with $\mu $ an arbitrary real number:
\begin{align}
 K_{\mu }(z)&=\frac{1}{2}\left( 
 \frac{z}{2}
 \right) ^{\mu }\int _{0}^{\infty }\frac{du}{u}\,u^{-\mu }\exp \!\left( 
 -u-\frac{z^{2}}{4u}
 \right) \label{;reprk1}\\
 &=2^{\mu -1}z^{\mu }\int _{0}^{\infty }\frac{dv}{v}\,v^{\mu }\exp \!\left( 
 -\frac{1}{4v}-z^{2}v
 \right) \label{;reprk2}
\end{align}
for $z>0$, where the second representation is due to the change of the variables 
with $u=1/(4v)$ in the first. For any $\kappa ,\mu \in \R $ with $\kappa -|\mu |>-1$, 
by \eqref{;reprk1} and Fubini's theorem, 
\begin{equation}\label{;genrel}
\begin{split}
 \int _{0}^{\infty }d\la \,\la ^{\kappa }K_{\mu }(\la )
 &=\frac{1}{2^{\mu +1}}\int _{0}^{\infty }du\,u^{-\mu -1}e^{-u}
 \int _{0}^{\infty }d\la \,\la ^{\kappa +\mu }
 \exp \!\left( 
 -\frac{\la ^{2}}{4u}
 \right) \\
 &=2^{\kappa -1}\int _{0}^{\infty }du\,u^{(\kappa -\mu -1)/2}e^{-u}\int _{0}^{\infty }d\eta \,
 \eta ^{(\kappa +\mu -1)/2}e^{-\eta }\\
 &=2^{\kappa -1}\Gamma \!\left( 
 \frac{1+\kappa -\mu }{2}
 \right) \!\Gamma \!\left( 
 \frac{1+\kappa +\mu }{2}
 \right) ,
\end{split}
\end{equation}
where, for the second line, we changed the variables with $\la =2\sqrt{u\eta }$. 
Taking $\kappa =\nu +2$ and $\mu =\nua $ (which is allowed by 
\eqref{;condnua}) shows that 
\begin{align*}
 \int _{0}^{\infty }d\la \,\la ^{\nua +2}K_{\nua }(\la )
 =2^{\nua }\sqrt{\pi }\Gamma \!\left( 
 \nua +\frac{3}{2}
 \right) ,
\end{align*}
and hence 
\begin{align}\label{;cpd}
 C_{p}'\int _{0}^{\infty }d\la \,\la ^{\nua +2}K_{\nua }(\la )=1
\end{align}
by the definition of $\nua $.

\begin{proof}[Proof of \lref{;laltu}]
We write $\tu (z)$ for the right-hand side of claim~\eqref{;laltuq} for $z\ge 0$ and 
prove that the Laplace transform of the function $\tu $ coincides with that of $U$:
\begin{align}\label{;lts}
 \int _{0}^{\infty }dz\,e^{-\al z}\tu (z)
 =\int _{0}^{\infty }dz\,e^{-\al z}U(z)
\end{align}
for all $\al >0$. Fix $\al >0$ arbitrarily below. Noting that, by the integrability of 
the function $\la ^{\nua +2}K_{\nua }(\la ),\,\la >0$, as exhibited in \eqref{;cpd}, 
the value of $\tu (z)$ (resp.\ of $\tu '(z)$) grows at most quadratically (resp.\ linearly) 
as $z\to \infty $, we begin with the left-hand side of \eqref{;lts}, which is seen to be 
equal to 
\begin{align*}
 \frac{1}{\al ^{2}}\int _{0}^{\infty }dz\,e^{-\al z}\tu ''(z)
 &=\frac{C_{p}'}{\al ^{2}}\int _{0}^{\infty }d\la \,\frac{
 \la ^{\nua +2}K_{\nua }(\la )
 }{
 \al +\frac{(p-1)^{2}}{2}\la ^{2}
 }\\
 &=C_{p}'\al ^{(\nua -3)/2}\int _{0}^{\infty }d\la \,\frac{
 \la ^{\nua +2}K_{\nua }(\sqrt{\al }\la )
 }{
 1+\frac{(p-1)^{2}}{2}\la ^{2}
 },
\end{align*}
where the first line follows by Fubini's theorem and the second by a simple 
change of variables. We insert into the last expression 
\begin{align}\label{;reprk2d}
 K_{\nua }(\sqrt{\al }\la )
 =2^{\nua -1}\al ^{\nua /2}\la ^{-\nua }\int _{0}^{\infty }\frac{dv}{v}\,v^{\nua }
 \exp \!\left( 
 -\frac{\la ^{2}}{4v}-\al v
 \right) 
\end{align}
following from \eqref{;reprk2}, which results in 
\begin{align}
 &\int _{0}^{\infty }dz\,e^{-\al z}\tu (z) \notag \\
 &=2^{\nua -1}C_{p}'\al ^{\nua -3/2}\int _{0}^{\infty }dv\,
 e^{-\al v}v^{\nua -1}\int _{0}^{\infty }d\la \,
 \frac{\la ^{2}}{1+\frac{(p-1)^{2}}{2}\la ^{2}}
 \exp \!\left( 
 -\frac{\la ^{2}}{4v} 
 \right) \label{;expl}
\end{align}
again by Fubini's theorem. We turn to the right-hand side of \eqref{;lts}. 
By the definitions~\eqref{;defu}, \eqref{;deffp} and \eqref{;defnua} of 
$U$, $\fp $ and $\nua $, respectively, 
\begin{align*}
 \int _{0}^{\infty }dz\,e^{-\al z}U(z)
 &=\int _{0}^{\infty }dz\,e^{-\al z}F(z,1)\\
 &=C_{p}\int _{0}^{\infty }dz\,e^{-\al z}\int _{0}^{z}dv\,(z-v)^{-\nua +1/2}v^{\nua +1/2}
 \phi ((p-1)^{2}v)\\
 &=C_{p}\Gamma \!\left( 
 \frac{3}{2}-\nua 
 \right) \al ^{\nua -3/2}\int _{0}^{\infty }dv\,e^{-\al v}v^{\nua +1/2}\phi ((p-1)^{2}v).
\end{align*}
By comparing the last expression with \eqref{;expl}, it now remains to verify that 
\begin{align}\label{;inr}
 2^{\nua -1}C_{p}'\int _{0}^{\infty }d\la \,
 \frac{\la ^{2}}{1+\frac{(p-1)^{2}}{2}\la ^{2}}
 \exp \!\left( 
 -\frac{\la ^{2}}{4v} 
 \right) 
 =C_{p}\Gamma \!\left( 
 \frac{3}{2}-\nua 
 \right) v^{3/2}\phi ((p-1)^{2}v)
\end{align}
for all $v>0$. To this end, observe that 
\begin{align*}
 2^{\nua -1}C_{p}'=\frac{1}{2\sqrt{\pi }\Gamma (\nua +3/2)}
\end{align*}
(recall \eqref{;cpd}) and that 
\begin{align*}
 C_{p}\Gamma \!\left( 
 \frac{3}{2}-\nua 
 \right) 
 =\frac{1}{\Gamma (\nua +3/2)},
\end{align*}
which is because of the fact that, by the definition~\eqref{;defcp} of $C_{p}$, 
\begin{align*}
 C_{p}&=\frac{\sin \pi (-1/2-\nua )}{\pi (1/2-\nua )(-1/2-\nua )}\\
 &=\frac{1}{(1/2-\nua )(-1/2-\nua )\Gamma (-1/2-\nua )\Gamma (\nua +3/2)}\\
 &=\frac{1}{\Gamma (3/2-\nua )\Gamma (\nua +3/2)},
\end{align*}
where the second line follows from relation~\eqref{;g1}. Moreover, the integral in the left-hand side 
of \eqref{;inr} is equal to 
\begin{align*}
 v^{3/2}\int _{0}^{\infty }d\la \,\frac{\la ^{2}}{1+\frac{(p-1)^{2}v}{2}\la ^{2}}\exp \!\left( 
 -\frac{\la ^{2}}{4}
 \right) 
\end{align*}
by a change of variables. Consequently, the relation \eqref{;inr} in question is equivalently 
rephrased as 
\begin{align*}
 \frac{1}{2\sqrt{\pi }}\int _{0}^{\infty }d\la \,\frac{\la ^{2}}{1+\frac{v}{2}\la ^{2}}
 \exp \!\left( 
 -\frac{\la ^{2}}{4}
 \right) =\phi (v)
\end{align*}
for all $v>0$, or, by changing the variables with $\la =2\sqrt{\eta }$ and by the 
definition~\eqref{;defphi} of $\phi $, 
\begin{align}\label{;inrd}
 \frac{2}{\sqrt{\pi }}\int _{0}^{\infty }d\eta \,\frac{\sqrt{\eta }e^{-\eta }}{1+v\eta }
 =\int _{0}^{\infty }d\la \,\la e^{-\la }\exp \!\left( 
 -\frac{v}{4}\la ^{2}
 \right) 
\end{align}
for all $v>0$, where we have replaced $v$ by $v/2$. We give a proof of \eqref{;inrd} 
below for the reader's convenience.

Fix $v>0$ arbitrarily. By inserting the relation 
\begin{align*}
 \frac{1}{1+v\eta }=\int _{0}^{\infty }dy\,e^{-(1+v\eta )y}
\end{align*}
and making use of Fubini's theorem, the left-hand side of \eqref{;inrd} is rewritten as 
\begin{align}
 \frac{2}{\sqrt{\pi }}\int _{0}^{\infty }dy\,e^{-y}\int _{0}^{\infty }d\eta \,
 \sqrt{\eta }e^{-(1+vy)\eta }
 &=\int _{0}^{\infty }dy\,\frac{e^{-y}}{(1+vy)^{3/2}} \notag \\
 &=\frac{2}{v^{3/2}}e^{1/v}\int _{1/\sqrt{v}}^{\infty }\frac{dz}{z^{2}}\,e^{-z^{2}} \notag \\
 &=\frac{2}{v}-\frac{4}{v^{3/2}}e^{1/v}\int _{1/\sqrt{v}}^{\infty }dz\,e^{-z^{2}}, \label{;inrdl}
\end{align}
where, for the second line, we changed the variables with $1+vy=vz^{2}$, and used 
the integration by parts formula for the third. On the other hand, again by the 
integration by parts formula, the right-hand side of \eqref{;inrd} is equal to 
\begin{align*}
 \frac{2}{v}-\frac{2}{v}\int _{0}^{\infty }d\la \,\exp \!\left( 
 -\frac{v}{4}\la ^{2}-\la 
 \right) ,
\end{align*}
which is readily seen to agree with \eqref{;inrdl} by changing the variables with 
$(\sqrt{v}/2)(\la +2/v)=z$. The proof of the lemma is completed.
\end{proof}

\begin{lem}\label{;lpdefp}
The function $\fp $ satisfies the equation 
\begin{align}\label{;pdefp}
 \frac{\partial \fp }{\partial s}(s,x)x^{2p-4}+\frac{1}{2}\frac{\partial ^{2}\fp }{\partial x^{2}}(s,x)
 =x^{-p}s,\quad s,x>0,
\end{align}
with initial condition $\fp (0,x)=0$ for $x>0$.
\end{lem}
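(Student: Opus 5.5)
The plan is to exploit the scaling relation \eqref{;rfpu}, $\fp (s,x)=x^{p}U(x^{2-2p}s)$, which reduces \eqref{;pdefp} to a second-order linear ODE for the one-variable function $U$. I would then verify that ODE using the representation of $U$ from \lref{;laltu}. The initial condition is immediate: $\fp (0,x)=0$ holds because the integrand in \eqref{;deffp} vanishes when $s=0$ (equivalently, $U(0)=0$ in \eqref{;laltuq}).

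\emph{Step 1: reduction to an ODE.} Put $z=x^{2-2p}s$, so that $\partial z/\partial x=(2-2p)z/x$. A direct computation gives
\begin{align*}
 \frac{\partial \fp }{\partial s}x^{2p-4}=x^{p-2}U'(z),\qquad
 \frac{\partial \fp }{\partial x}=x^{p-1}G(z),\quad G:=pU-2(p-1)zU'.
\end{align*}
Differentiating once more,
\begin{align*}
 \frac{\partial ^{2}\fp }{\partial x^{2}}=x^{p-2}\bigl\{ (p-1)G-2(p-1)zG'\bigr\} ,
 \qquad G'=(2-p)U'-2(p-1)zU''.
\end{align*}
The right-hand side of \eqref{;pdefp} is $x^{-p}s=x^{p-2}z$. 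Cancelling $x^{p-2}$, \eqref{;pdefp} becomes equivalent to
\begin{align*}
 2(p-1)^{3}z^{2}U''(z)+\bigl\{ 1-p(p-1)^{2}\bigr\} U'(z)\cdot \frac{1}{1}+\frac{p(p-1)}{2}U(z)=z
\end{align*}
for $z>0$. The coefficients here are to be recomputed carefully; as a check, the $U'$-coefficient should come out as $1-(p-1)^{2}\{ 1+(2-p)\} $.

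\emph{Step 2: verification via \lref{;laltu}.} Write $h(v)=\int _{0}^{\infty }d\la \,\la ^{\nua +2}K_{\nua }(\la )e^{-c v\la ^{2}}$ with $c=(p-1)^{2}/2$. Then $U''=C_{p}'h$, $U'(z)=C_{p}'\int _{0}^{z}h$ and $U(z)=C_{p}'\int _{0}^{z}(z-v)h(v)\,dv$.

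Differentiating the ODE once or twice in $z$ removes the integrated terms. This turns it into an identity purely in terms of $h$, $zh'$ and $z^{2}h''$, together with the constant source term, which is fixed by the normalization \eqref{;cpd}: $C_{p}'\int \la ^{\nua +2}K_{\nua }=1$. The boundary values at $z=0$ are matched using $U(0)=U'(0)=0$ and that same normalization.

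The $z$-derivatives of $e^{-cz\la ^{2}}$ are converted into $\la $-derivatives through
\begin{align*}
 z\,\partial _{z}e^{-cz\la ^{2}}=\tfrac{1}{2}\la \,\partial _{\la }e^{-cz\la ^{2}},
\end{align*}
after which everything is moved onto $\la ^{\nua +2}K_{\nua }(\la )$ by integration by parts in $\la $. The resulting expression should vanish by the Bessel equation
\begin{align*}
 \la ^{2}K_{\nua }''+\la K_{\nua }'-(\la ^{2}+\nua ^{2})K_{\nua }=0.
\end{align*}
The specific value $\nua =-1/\{ 2(p-1)\} $ from \eqref{;defnua} is what should make the coefficients match.

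The boundary terms at $\la =0$ are controlled by $\la ^{|\nua |}K_{\nua }(\la )\to 2^{|\nua |-1}\Gamma (|\nua |)$ together with \eqref{;condnua}; at $\la =\infty $ the exponential decay of $K_{\nua }$ suffices. I expect this bookkeeping to be the main obstacle: the integrations by parts, the boundary terms at $\la =0$ (where only $p>4/3$ is available), and matching the constant term.

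As a fallback, I would work with Laplace transforms instead. The ODE of Step 1 transforms into a first-order ODE in $\al $ for $\int _{0}^{\infty }e^{-\al z}U(z)\,dz$. The proof of \lref{;laltu} already computed this transform explicitly, as $C_{p}\Gamma (3/2-\nua )\al ^{\nua -3/2}\int _{0}^{\infty }e^{-\al v}v^{\nua +1/2}\phi ((p-1)^{2}v)\,dv$, so it would remain to check that this explicit expression satisfies the transformed ODE.
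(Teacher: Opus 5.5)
Your strategy is viable and differs from the paper's: scale to an ODE for $U$, then verify that ODE from \lref{;laltu} by moving $z$-derivatives onto $\lambda^{\nu+2}K_{\nu}(\lambda)$. However, the ODE at the end of Step~1 is wrong, and Step~2 cannot close with it.

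Your formulas for $G$, $G'$ and $\partial_x^2\fp$ are correct. Assembling them gives $\frac12\partial_x^2\fp=x^{p-2}\{2(p-1)^2z^2U''-(p-1)zU'+\frac{p(p-1)}{2}U\}$, so \eqref{;pdefp} is equivalent to
\begin{align*}
 2(p-1)^{2}z^{2}U''(z)+\{1-(p-1)z\}U'(z)+\frac{p(p-1)}{2}U(z)=z,\quad z>0 .
\end{align*}
The coefficient of $z^2U''$ is $2(p-1)^2$, and the coefficient of $U'$ is not constant: the $1$ comes from $x^{2p-4}\partial_s\fp$ and the $-(p-1)z$ from $\partial_x^2\fp$. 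As a check, at $p=4/3$ the paper's solution $\fp=s^2/2$ (i.e.\ $U=z^2/2$) satisfies this equation but not yours. Likewise $U=z^2/2+z^3/50$ satisfies it at $p=6/5$.

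Relatedly, after differentiating twice you do not get an identity in $h$, $zh'$, $z^2h''$ alone. A bare $h'$ term survives, and it is indispensable. Indeed $\partial_z e^{-cz\lambda^2}=-c\lambda^2e^{-cz\lambda^2}$ is the only source of the $-\lambda^2K_\nu$ term in Bessel's equation. The substitution $z\partial_z\leftrightarrow-\frac12(1+\delta)$, with $\delta:=\lambda\,d/d\lambda$, produces only Euler-type operators in $\lambda$.

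With the corrected ODE everything works. Writing $U''=C_p'h$, the twice-differentiated equation is
\begin{align*}
 2(p-1)^2z^2h''+(p-1)(8p-9)zh'+h'+\tfrac32(p-1)(3p-4)h=0 .
\end{align*}
Integrate by parts; the boundary terms at $\lambda=0$ are $O(\lambda^{2\nu+3})$ and vanish exactly because $p>4/3$. Using $1/(p-1)=-2\nu$, the left side then equals
\begin{align*}
 \frac{(p-1)^2}{2}\int_0^\infty d\lambda\,e^{-cz\lambda^2}\bigl\{\delta^2-(2\nu+4)\delta+4\nu+4-\lambda^2\bigr\}\bigl(\lambda^{\nu+2}K_\nu(\lambda)\bigr),
\end{align*}
and the braces reduce to $\lambda^{\nu+2}(\delta^2-\nu^2-\lambda^2)K_\nu(\lambda)=0$. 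The two initial conditions at $z=0^+$ follow from three facts:
\begin{itemize}
\item $U(0)=U'(0)=0$;
\item $h(0)$ and $h'(0^+)$ are finite, by \eqref{;genrel};
\item $C_p'h(0)=1$, which is \eqref{;cpd}.
\end{itemize}
Your Laplace fallback would give a second-order, not first-order, equation in $\alpha$, because of the $z^2U''$ term.

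For comparison, the paper substitutes $y=x^{p-1}/(p-1)$. This turns \eqref{;pdefp} into the backward equation of a Bessel process of index $\nu$ with source $C_p''y^{-2\nu-3}s$. It then writes $G$ as a superposition over $\lambda$ of $g_\lambda(s,y)=y^{-\nu}K_\nu(\lambda y)e^{-\lambda^2s/2}$, each of which solves the homogeneous equation. The source term then comes solely from the iterated integral $\int_0^s du\int_0^u dv$ together with \eqref{;cpd}. The only analytic work is dominating the $y$-derivatives under the integral.

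Your route is essentially the same computation after the rescaling $\lambda\mapsto\lambda/y$, which moves the $y$-dependence into the exponent, traded for a one-variable check. Differentiating in $z$ under the integral is easy for $z>0$ because of the Gaussian factor. The price is the bookkeeping you anticipated: integrations by parts in $\lambda$, boundary terms at $0$, two differentiations, and matching of initial data. The paper's formulation avoids all of that.
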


\begin{proof}
By the definition~\eqref{;deffp} of $\fp $, it is clear that $\fp $ satisfies the initial condition. 
Define the function $G\colon [0,\infty )\times [0,\infty )\to [0,\infty )$ through 
\begin{align*}
 \fp (s,x)=G\!\left( 
 s,\frac{x^{p-1}}{p-1}
 \right) ,\quad s,x\ge 0.
\end{align*}
In order to prove the lemma, it then suffices to show that $G$ satisfies the equation 
\begin{align}\label{;pdeg}
 \frac{\partial G}{\partial s}(s,y)+\frac{1}{2}\frac{\partial ^{2}G}{\partial y^{2}}(s,y)
 +\frac{2\nua +1}{2y}\frac{\partial G}{\partial y}(s,y)
 =C_{p}''y^{-2\nua -3}s,\quad 
 s,y>0,
\end{align}
with $C_{p}'':=(p-1)^{-2\nua -3}$.
To this end, by \eqref{;rfpu}, we see from \lref{;laltu} that $G$ admits the representation 
\begin{align*}
 G(s,y)=C_{p}'C_{p}''y^{-\nua }
 \int _{0}^{s}du\int _{0}^{u}dv\int _{0}^{\infty }d\la \,
 \la ^{\nua +2}K_{\nua }(y\la )\exp \!\left( 
 -\frac{v}{2}\la ^{2}
 \right) 
\end{align*}
for all $s\ge 0$ and $y>0$, owing to the repeated use of change of variables. 
For each $\la >0$, define 
\begin{align}\label{;defgla}
 g_{\la }(s,y):=y^{-\nua }K_{\nua }(\la y)\exp \!\left( 
 -\frac{\la ^{2}}{2}s
 \right) ,\quad s\ge 0,\ y>0.
\end{align}
Notice that every $g_{\la }$ solves the partial differential equation 
\begin{align}\label{;pdegla}
 \frac{\partial g}{\partial s}(s,y)+\frac{1}{2}\frac{\partial ^{2}g}{\partial y^{2}}(s,y)
 +\frac{2\nua +1}{2y}\frac{\partial g}{\partial y}(s,y)=0,\quad s,y>0;
\end{align}
indeed, from the ordinary differential equation (see \cite[Equation~\thetag{5.7.7}]{leb}) 
satisfied by $K_{\nua }$, we see that 
\begin{align}\label{;odek}
 \left( 
 \frac{d^{2}}{dz^{2}}+\frac{2\nua +1}{z}\frac{d}{dz}
 \right) \!\{ z^{-\nua }K_{\nua }(z)\} =z^{-\nua }K_{\nua }(z),\quad z>0,
\end{align}
from which the above partial differential equation follows readily. Therefore, 
for every $s,y>0$, if we verify that 
\begin{equation}\label{;verif}
\begin{split}
 \frac{\partial G}{\partial y}(s,y)
 &=C_{p}'C_{p}''\int _{0}^{s}du\int _{0}^{u}dv\int _{0}^{\infty }d\la \,\la ^{\nua +2}
 \frac{\partial g_{\la }}{\partial y}(v,y),\\
 \frac{\partial ^{2}G}{\partial y^{2}}(s,y)
 &=C_{p}'C_{p}''\int _{0}^{s}du\int _{0}^{u}dv\int _{0}^{\infty }d\la \,\la ^{\nua +2}
 \frac{\partial ^{2}g_{\la }}{\partial y^{2}}(v,y),
\end{split}
\end{equation}
then the left-hand side of equation~\eqref{;pdeg} turns into 
\begin{align*}
 &C_{p}'C_{p}''\int _{0}^{s}dv\int _{0}^{\infty }d\la \,\la ^{\nua +2}g_{\la }(v,y)
 -C_{p}'C_{p}''\int _{0}^{s}du\int _{0}^{u}dv\int _{0}^{\infty }d\la \,\la ^{\nua +2}
 \frac{\partial g_{\la }}{\partial v}(v,y)\\
 &=C_{p}'C_{p}''s\int _{0}^{\infty }d\la \,\la ^{\nua +2}g_{\la }(0,y)\\
 &=C_{p}'C_{p}''sy^{-2\nua -3}\int _{0}^{\infty }d\la \,\la ^{\nua +2}K_{\nua }(\la ),
\end{align*}
which agrees with the right-hand side of \eqref{;pdeg} thanks to \eqref{;cpd}. 
Here we have used Fubini's theorem for the second line and a change of 
variables for the third. 

It remains to verify \eqref{;verif}. Fix $s>0$ arbitrarily below. What to show 
is that, for every fixed $0<a<b$, the two functions $k_{1},k_{2}$ on 
$(0,\infty )$ defined respectively by 
\begin{align}\label{;k12}
 k_{1}(\la ):=\sup _{a<y<b}\left| 
 \frac{d}{dy}\{ y^{-\nua }K_{\nua }(\la y)\} 
 \right| ,\quad 
 k_{2}(\la ):=\sup _{a<y<b}\left| 
 \frac{d^{2}}{dy^{2}}\{ y^{-\nua }K_{\nua }(\la y)\} 
 \right| ,
\end{align}
for $\la >0$, are both integrable in the sense that 
\begin{align}\label{;int12}
 \int _{0}^{s}du\int _{0}^{u}dv\int _{0}^{\infty }d\la \,\la ^{\nua +2}k_{i}(\la )
 \exp \left( 
 -\frac{v}{2}\la ^{2}
 \right) <\infty ,\quad i=1,2.
\end{align}
Recall (see, e.g., \cite[Equation~\thetag{5.7.9}]{leb}) the relation 
\begin{align*}
 \frac{d}{dz}\{ z^{-\mu }K_{\mu }(z)\} =-z^{-\mu }K_{\mu +1}(z),\quad z>0,
\end{align*}
which holds for any $\mu \in \R $. The above relation entails in particular 
that the function $z^{-\mu }K_{\mu }(z),\,z>0$, is decreasing.  
Observing that 
\begin{align*}
 \frac{d}{dy}\{ y^{-\nua }K_{\nua }(\la y)\} 
 &=\la ^{\nua }\frac{d}{dy}\{ (\la y)^{-\nua }K_{\nua }(\la y)\} \\
 &=-\la ^{\nua +2}y(\la y)^{-\nua -1}K_{\nua +1}(\la y)
\end{align*}
for $y>0$, we see that $k_{1}(\la )$ is dominated by 
\begin{align*}
 \la ^{\nua +2}b(\la a)^{-\nua -1}K_{\nua +1}(\la a)
 =a^{-\nua -1}b\la K_{\nua +1}(a\la )
\end{align*}
for all $\la >0$, which verifies \eqref{;int12} for $i=1$ since we have 
\begin{align*}
 \int _{0}^{\infty }d\la \,\la ^{\nua +3}K_{\nua +1}(a\la )<\infty 
\end{align*}
in view of \eqref{;genrel}; indeed, $\nua $ fulfills 
$\nua +3-|\nua +1|>-1$. Turning to the case $i=2$, we see that  
\begin{align*}
 \frac{d^{2}}{dy^{2}}\{ y^{-\nua }K_{\nua }(\la y)\} 
 &=\la ^{\nua }\frac{d^{2}}{dy^{2}}\{ (\la y)^{-\nua }K_{\nua }(\la y)\} \\
 &=\la ^{\nua +2}\left\{ 
 (\la y)^{-\nua }K_{\nua }(\la y)+(2\nua +1)(\la y)^{-\nua -1}K_{\nua +1}(\la y)
 \right\} 
\end{align*}
for $y>0$, where the second line is due to \eqref{;odek}. Therefore, $k_{2}(\la )$ 
is dominated by 
\begin{align*}
 &\la ^{\nua +2}\left\{ 
 (\la a)^{-\nua }K_{\nua }(\la a)+|2\nua +1|(\la a)^{-\nua -1}K_{\nua +1}(\la a)
 \right\} \\
 &=a^{-\nua }\la ^{2}K_{\nua }(a\la )+|2\nua +1|a^{-\nua -1}\la K_{\nua +1}(a\la )
\end{align*}
for all $\la >0$, verifying \eqref{;int12} since 
\begin{align*}
 \int _{0}^{\infty }d\la \,\la ^{\nua +4}K_{\nua }(a\la )<\infty 
\end{align*}
as well. Therefore we have the lemma.
\end{proof}

We are in a position to prove \pref{;pp}.

\begin{proof}[Proof of \pref{;pp}]
We apply It\^o's formula to the process 
$\{ \fp (A_{t},M_{t})\} _{0\le t\le 1}$, where we set 
\begin{align*}
 A_{t}=\int _{0}^{t}M_{s}^{2p-4}|\dv _{s}|^{2}\,ds,\quad 0\le t\le 1.
\end{align*}
Then, in differential notation, by virtue of \eqref{;reldv}, 
\begin{align*}
 &d\fp (A_{t},M_{t})\\
 &=\frac{\partial \fp }{\partial s}(A_{t},M_{t})M_{t}^{2p-4}|\dv _{t}|^{2}\,dt
 +\frac{\partial \fp }{\partial x}(A_{t},M_{t})\dv _{t}\cdot d\br _{t}
 +\frac{1}{2}\frac{\partial ^{2}\fp }{\partial x^{2}}(A_{t},M_{t})|\dv _{t}|^{2}\,dt\\
 &=\frac{\partial \fp }{\partial x}(A_{t},M_{t})\dv _{t}\cdot d\br _{t}
 +M_{t}^{-p}|\dv _{t}|^{2}A_{t}\,dt
\end{align*}
with $\fp (A_{0},M_{0})=0$ a.s., where we have used \lref{;lpdefp} for the 
third line as well as for the initial value. The stochastic integral part 
in the above expression determines a true martingale thanks to the 
boundedness of the integrand, and hence, taking the expectation, 
we have the proposition.
\end{proof}

\subsection{On case $1<p<4/3$}\label{;sscop}
This section is intended for some observation on the case $1<p<4/3$. 
For every nonnegative integer $n$, set 
\begin{align*}
 H_{p,n}(t):=\int _{0}^{t}ds\,\ex \!\left[ 
 M_{s}^{2n-2-(2n-1)p}|\dv _{s}|^{2}
 \left( 
 \int _{0}^{s}du\,M_{u}^{2p-4}|\dv _{u}|^{2}
 \right) ^{n}
 \right] ,\quad 0\le t\le 1,
\end{align*}
where $p$ may be any real number.
The left-hand side of \eqref{;ppq} corresponds to the case $n=1$. 
Applying It\^o's formula to the process 
\begin{align*}
 M_{t}^{2n+2-(2n+1)p}\left( 
 \int _{0}^{t}ds\,M_{s}^{2p-4}|\dv _{s}|^{2}
 \right) ^{n+1},\quad 0\le t\le 1,
\end{align*}
we have the following relationship between $H_{p,n},\,n=0,1,2,\ldots $:
\begin{equation}\label{;relh}
\begin{split}
 H_{p,n}(t)
 &=\frac{1}{n+1}\ex \!\left[ 
 M_{t}^{2n+2-(2n+1)p}\left( 
 \int _{0}^{t}ds\,M_{s}^{2p-4}|\dv _{s}|^{2}
 \right) ^{n+1}
 \right] \\
 &\quad +\frac{2n+1}{2n+2}(p-1)\{ 2n+2-(2n+1)p\} 
 H_{p,n+1}(t),
\end{split}
\end{equation}
which holds true for any $t$ and $p$. We let $n=1$:
\begin{align}\label{;relh1}
 H_{p,1}(t)=\frac{1}{2}\ex \!\left[ 
 M_{t}^{4-3p}\left( 
 \int _{0}^{t}ds\,M_{s}^{2p-4}|\dv _{s}|^{2}
 \right) ^{2}
 \right] +\frac{3(p-1)(4-3p)}{4}H_{p,2}(t);
\end{align}
in particular, when $p=4/3$, the second term on the right-hand side 
vanishes, whence 
\begin{align*}
 H_{4/3,1}(t)=\frac{1}{2}\ex \!\left[ 
 \left( 
 \int _{0}^{t}ds\,M_{s}^{-4/3}|\dv _{s}|^{2}
 \right) ^{2}
 \right] ,
\end{align*}
as was observed in \ssref{;ssc43} in the case $t=1$. The last expression indicates 
that, when $p=4/3$, the function 
\begin{align*}
 \fp (s,x)=\frac{1}{2}s^{2},\quad s,x\ge 0,
\end{align*}
satisfies the partial differential equation~\eqref{;pdefp} in \lref{;lpdefp}, 
which is easily seen to be the case. We also recall \eqref{;fp43} in this 
respect. Returning to the case of general $p$, now we take $n=2$ in 
\eqref{;relh} and plug that relation into \eqref{;relh1} above to get 
\begin{align*}
 H_{p,1}(t)&=\frac{1}{2}\ex \!\left[ 
 M_{t}^{4-3p}\left( 
 \int _{0}^{t}ds\,M_{s}^{2p-4}|\dv _{s}|^{2}
 \right) ^{2}
 \right] \\
 &\quad +\frac{(p-1)(4-3p)}{4}
 \ex \!\left[ 
 M_{t}^{6-5p}\left( 
 \int _{0}^{t}ds\,M_{s}^{2p-4}|\dv _{s}|^{2}
 \right) ^{3}
 \right] \\
 &\quad +\frac{5(p-1)^{2}(4-3p)(6-5p)}{8}H_{p,3}(t);
\end{align*}
in particular, if we take $p=6/5$, then 
\begin{align}\label{;relh2}
 H_{6/5,1}(t)=\frac{1}{2}\ex \!\left[ 
 M_{t}^{2/5}\left( 
 \int _{0}^{t}ds\,M_{s}^{-8/5}|\dv _{s}|^{2}
 \right) ^{2}
 \right] +\frac{1}{50}
 \ex \!\left[ 
 \left( 
 \int _{0}^{t}ds\,M_{s}^{-8/5}|\dv _{s}|^{2}
 \right) ^{3}
 \right] ,
\end{align}
indicating that the function 
\begin{align*}
 \fp (s,x)=\frac{1}{2}x^{2/5}s^{2}+\frac{1}{50}s^{3},\quad s,x\ge 0,
\end{align*}
solves equation~\eqref{;pdefp}, which is indeed the case, too.  
Repeating the above procedure, we infer that, 
when $p=(2n+2)/(2n+1),\,n=1,2,\ldots $ (notice that index $\nua $ 
defined in the same way as in \eqref{;defnua} is a half-integer $-n-1/2$), 
the corresponding function $U$ defined through \eqref{;defu} from $\fp $ is a 
polynomial of order $n+1$; however, that information does not 
seem so useful in deriving a precise estimate. For instance, 
by dropping the first term on the right-hand side, we know 
from \eqref{;relh2} that 
\begin{align*}
 H_{6/5,1}(1)&\ge \frac{1}{50}\left( 
 \int _{0}^{1}ds\,\ex \!\left[ M_{s}^{-8/5}|\dv _{s}|^{2}\right] 
 \right) ^{3}\\
 &=\frac{5}{2}I_{2/5}(f)^{3},
\end{align*}
where the first line is due to Jensen's inequality and Fubini's 
theorem and the second to \lref{;lpre1}. For the left-most side above 
corresponds to the integral with respect to $t$ on the left-hand 
side of \eqref{;rslti} with $p=6/5$, we have 
\begin{align*}
 I_{6/5}(f)+\frac{3}{25}I_{2/5}(f)^{3}
 \le \frac{3}{5}\norm{f^{-4/5}|\nabla f|^{2}}{1}
\end{align*}
for any $f\in \ccp $, but of course, this improvement is far from 
being sharp as the above argument shows.  We do not pursue the case 
$1<p<4/3$ further in this paper.

\section{H\"older-type inequality for functionals $I_{p}$}\label{;shii}

The purpose of this section is to prove the following theorem: 

\begin{thm}\label{;th}
For every pair $0<p<q$, we have \eqref{;hoel} for any 
$f\in \ccp $.
\end{thm}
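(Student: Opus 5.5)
The plan is to redo, with $q$ in place of $p$, the Itô argument behind \pref{;pp}. The first step records both functionals as time integrals. Applying \lref{;lpre1} with $\vp (x)=x^{r}$ (or $x\log x$ when $r=1$) and $t=1$ gives
\begin{align*}
 I_{r}(f)=\frac{r}{2}\int _{0}^{1}dt\,\ex \!\left[ M_{t}^{r-2}|\dv _{t}|^{2}\right] ,\quad r>0 .
\end{align*}
In particular $I_{q}(f)=\frac{q}{2}\ex [A_{1}]$ with
\begin{align*}
 A_{t}:=\int _{0}^{t}M_{s}^{q-2}|\dv _{s}|^{2}\,ds .
\end{align*}
So \eqref{;hoel} becomes a comparison between $\ex \int _{0}^{1}M_{t}^{p-2}|\dv _{t}|^{2}\,dt$ and $(\ex A_{1})^{p/q}$. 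Under $M\mapsto cM$, $A$ scales like $c^{q}$ and $I_{p}$ like $c^{p}$, which is consistent with the exponent $p/q$.

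The second step imitates the proof of \pref{;pp}. I look for a function $\fp (s,x)$ on $[0,\infty )\times (0,\infty )$ with $\fp (0,x)=0$ solving
\begin{align*}
 \frac{\partial \fp }{\partial s}(s,x)x^{q-2}+\frac{1}{2}\frac{\partial ^{2}\fp }{\partial x^{2}}(s,x)=\frac{p}{2}x^{p-2}.
\end{align*}
Itô's formula applied to $\fp (A_{t},M_{t})$, together with \eqref{;reldv} and boundedness of the integrands, then gives $I_{p}(f)=\ex [\fp (A_{1},M_{1})]$. Scaling suggests the self-similar ansatz
\begin{align*}
 \fp (s,x)=x^{p}h(x^{-q}s),
\end{align*}
which reduces the PDE to a second-order linear ODE for $h$ with $h(0)=0$. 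This ODE is of confluent hypergeometric (Kummer) type.

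Rather than solving it by special functions, I would construct $\fp $ via a Laplace-type representation, as in \lref{;laltu}. Write $\fp $ as a superposition over $\la $ of separated solutions $e^{-\la s}\psi _{\la }(x)$ of the homogeneous equation, plus a particular solution, and verify the PDE by differentiating under the integral. The dominated-convergence bounds needed for this are analogous to those in the proof of \lref{;lpdefp}. The choice of $h$ among solutions is fixed by requiring polynomial (not exponential) growth.

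The final step, which I expect to be the main obstacle, is the pointwise bound
\begin{align*}
 \fp (s,x)\le C\,s^{p/q},\qquad C=\frac{\Gamma (1+1/q)}{\Gamma (1+1/q-p/q)}\left( \frac{q^{2}}{2}\right) ^{p/q}.
\end{align*}
Equivalently, $x^{p}h(x^{-q}s)\le C s^{p/q}$, i.e.\ $h(z)\le C z^{p/q}$ for all $z\ge 0$. Granting this bound, concavity of $s\mapsto s^{p/q}$ (since $p<q$) and Jensen's inequality give
\begin{align*}
 I_{p}(f)\le C(\ex A_{1})^{p/q}=C\left( \frac{2I_{q}(f)}{q}\right) ^{p/q},
\end{align*}
which is \eqref{;hoel}. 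The Gamma ratio should appear as $\lim _{z\to \infty }h(z)/z^{p/q}$. This limit can be computed by a Mellin--Laplace calculation exactly like the one yielding \eqref{;fasym}, using $x^{-a}=\Gamma (a)^{-1}\int _{0}^{\infty }\la ^{a-1}e^{-\la x}\,d\la $.

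The remaining difficulty is to show that $h(z)/z^{p/q}$ is monotone increasing, so that this limit is in fact the supremum. I would try to obtain this from the integral representation directly, e.g.\ by writing $h(z)/z^{p/q}$ as an average of increasing functions of $z$.
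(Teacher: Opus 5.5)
Your outline matches the paper's strategy in every structural respect. You apply It\^o's formula to $\fp(A_t,M_t)$ with $A_t=\int_0^tM_s^{q-2}|\dv_s|^2\,ds$, using a solution of $x^{q-2}\partial_s\fp+\frac12\partial_x^2\fp=\frac p2x^{p-2}$, $\fp(0,\cdot)=0$, of the form $x^ph(s/x^q)$. This is $\frac p2$ times the paper's $J$, so $h=\frac p2V$. You then need a bound $\fp\le Cs^{p/q}$ with the correct $C$, and Jensen finishes. But the proposal stops where the proof has its content. No solution $\fp$ is written down, and the bound $h(z)\le Cz^{p/q}$, which you flag as the main obstacle, is only reduced to an unproved monotonicity of $h(z)/z^{p/q}$. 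Knowing $\lim_{z\to\infty}h(z)/z^{p/q}$ gives no control at finite $z$, so as it stands the theorem is not proved.

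The missing idea is a representation in which the bound is visible. The paper takes as \emph{definition}
\[
J(s,x)=C_{p,\nub}\int_0^\infty dv\,\{(s-x^qv)_+\}^{p/q}k(v),\qquad k(v)=v^{-1-1/q}\int_0^\infty d\la\,\frac{\la^{1-2(p-1)/q}}{1+q^2\la^2/8}\,e^{-\la^2/(4v)}\ge 0 .
\]
It then shows, by comparing Laplace transforms in $z$, that $V(z)=J(z,1)$ equals the Bessel--Duhamel form
\[
V(z)=C_{p,\nub}'\int_0^z du\int_0^\infty d\la\,\la^{1-(2p-1)/q}K_{-1/q}(\la)\,e^{-q^2u\la^2/8}.
\]
The PDE is checked on this second form, in the variable $y=\frac2qx^{q/2}$, using the modes $y^{1/q}K_{-1/q}(\la y)e^{-\la^2u/2}$ and differentiation under the integral as in \lref{;lpdefp}. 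With the first form, $(s-x^qv)_+\le s$ gives $J(s,x)\le s^{p/q}C_{p,\nub}\int_0^\infty k(v)\,dv$ at once. That integral evaluates, via Gamma and Beta integrals, to $\frac2pC$, which is exactly your constant once the factor $\frac p2$ is restored. Equivalently, $V(z)/z^{p/q}=C_{p,\nub}\int_0^\infty(1-v/z)_+^{p/q}k(v)\,dv$, which is precisely the ``average of increasing functions'' you were looking for.

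A shortcut from the Bessel form alone also works. Write $\la^{1-(2p-1)/q}K_{-1/q}(\la)=\la^{1-2p/q}\cdot\la^{1/q}K_{-1/q}(\la)$. Since $z\mapsto z^{1/q}K_{-1/q}(z)$ is decreasing with limit $2^{1/q-1}\Gamma(1/q)$ at $0+$, this bounds $V'(u)$ by a multiple of $u^{p/q-1}$, which integrates to the same sharp constant.

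Finally, do not build $\fp$ literally as a stationary particular solution plus a superposition of decaying modes. In the paper's variables that split would require $\int_0^\infty d\la\,\la^{-2}\la^{1-(2p-1)/q}y^{1/q}K_{-1/q}(\la y)$, whose integrand is of order $\la^{-1-2p/q}$ at $\la=0$, so it diverges. The time-integrated (Duhamel) superposition above is what converges.
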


Given $q>0$, let 
\begin{align}\label{;defnub}
 \nub :=-\frac{1}{q}
\end{align} 
and fix $0<p<q$ below. To prove the theorem, define the function 
$J\colon [0,\infty )\times [0,\infty )\to [0,\infty )$ by 
\begin{equation}\label{;defj}
\begin{split}
 &J(s,x)\\
 &:=C_{p,\nub }
 \int _{0}^{\infty }dv\left\{ 
 (s-x^{q}v)_{+}
 \right\} ^{-p\nub }\!v^{\nub -1}\int _{0}^{\infty }d\la \,
 \frac{\la ^{2(p-1)\nub +1}}{1+\frac{q^{2}\la ^{2}}{8}}
 \exp \!\left( 
 -\frac{\la ^{2}}{4v}
 \right) 
\end{split}
\end{equation}
for $s,x\ge 0$, where the constant $C_{p,\nub }$ is given by 
\begin{align}\label{;cpnub}
 C_{p,\nub }
 =\frac{1}
 {2^{2(p-1)\nub +1}\Gamma (1-p\nub )\Gamma (1+p\nub )\Gamma (1+(p-1)\nub )}.
\end{align}
Then we have the following proposition: 

\begin{prop}\label{;ph}
It holds that, for every $0\le t\le 1$, 
\begin{align}\label{;phq}
 \int _{0}^{t}ds\,\ex \!\left[ 
 M_{s}^{p-2}|\dv _{s}|^{2}
 \right] 
 =\ex \!\left[ 
 J\!\left( 
 \int _{0}^{t}ds\,M_{s}^{q-2}|\dv _{s}|^{2},M_{t}
 \right) 
 \right] .
\end{align}
\end{prop}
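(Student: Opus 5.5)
Following the proof of \pref{;pp}, I will apply It\^o's formula to $\{J(B_{t},M_{t})\}_{0\le t\le 1}$ with
\begin{align*}
 B_{t}:=\int _{0}^{t}ds\,M_{s}^{q-2}|\dv _{s}|^{2}.
\end{align*}
The whole argument then reduces to an analytic lemma analogous to \lref{;lpdefp}: $J$ satisfies $J(0,x)=0$ for $x>0$ and
\begin{align*}
 \frac{\partial J}{\partial s}(s,x)x^{q-2}+\frac{1}{2}\frac{\partial ^{2}J}{\partial x^{2}}(s,x)=x^{p-2},\quad s,x>0.
\end{align*}
Granting this, It\^o's formula together with \eqref{;reldv} gives
\begin{align*}
 dJ(B_{t},M_{t})=\partial _{x}J(B_{t},M_{t})\,\dv _{t}\cdot d\br _{t}+M_{t}^{p-2}|\dv _{t}|^{2}\,dt .
\end{align*}
The stochastic integral is a true martingale, since $M$ is bounded away from $0$ and $\infty$, $\dv $ is bounded, and $B$ is bounded. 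Taking expectations then yields \eqref{;phq}.

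To prove the PDE I will mimic \lref{;laltu} and \lref{;lpdefp}. First I will note the scaling $J(s,x)=x^{p}V(x^{-q}s)$ with $V(z):=J(z,1)$, which follows from \eqref{;defj} by substituting $v\mapsto x^{-q}v$. Next I will derive an alternative representation of $V$ via Laplace transforms in $z$. The transform of \eqref{;defj} factorizes as $\Gamma(1-p\nub)\al^{p\nub-1}$ times a Laplace transform in $v$. After inserting \eqref{;reprk2} with $\mu=\nub$, I would aim to identify $V$ with
\begin{align*}
 C\int _{0}^{z}du\,\Bigl(\frac{u}{z}\Bigr)^{\cdots}\int _{0}^{\infty }d\la \,\la ^{\kappa }K_{\nub }(\la )e^{-q^{2}u\la ^{2}/8}
\end{align*}
or a similar Riemann--Liouville-type integral, where the exponent $-p\nub$ replaces the double integral $\int _{0}^{z}\int _{0}^{u}$ that appeared before. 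The factor $1/(1+q^{2}\la ^{2}/8)$ in \eqref{;defj} is exactly what produces $e^{-q^{2}v\la ^{2}/8}$ after the Laplace inversion, as in \eqref{;inr}. The constant \eqref{;cpnub} should match through \eqref{;g1} and \eqref{;genrel}.

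Then, with the change of variables $y=x^{q/2}/(q/2)$ (the analogue of $x^{p-1}/(p-1)$, corresponding to index $\nub=-1/q$), the equation becomes a Bessel-type backward heat equation
\begin{align*}
 \partial _{s}G+\tfrac12\partial _{y}^{2}G+\tfrac{2\nub +1}{2y}\partial _{y}G=c\,y^{\beta }
\end{align*}
for a suitable power $\beta$. The kernels $g_{\la}(s,y)=y^{-\nub}K_{\nub}(\la y)e^{-\la ^{2}s/2}$ solve the homogeneous equation by \eqref{;odek}. So the inhomogeneous term comes only from the $s$-derivative hitting the outer integral, where it evaluates at $v=0$ and gives $y^{\beta}$ times $\int \la ^{\kappa}K_{\nub}(\la)\,d\la$ via \eqref{;genrel}. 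Differentiation under the integral sign will be justified as in \eqref{;k12}--\eqref{;int12}, using the condition $\kappa-|\mu|>-1$ of \eqref{;genrel}, which must be checked for $\nub=-1/q$ and $0<p<q$.

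I expect the main obstacle to be the Laplace-transform identification of $J$ with the Bessel representation, including getting the fractional-integral exponent right and matching $C_{p,\nub}$. Here the right-hand side $x^{p-2}$ does not contain $s$, unlike \eqref{;pdefp}, so the outer structure is a single fractional integral rather than a double one. I would first verify the PDE directly in the easy case $p=q$, where $J(s,x)=s$ trivially works, to calibrate the constants.
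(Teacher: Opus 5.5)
Your proposal follows the paper's proof essentially step for step. It\^o's formula for $J(B_t,M_t)$ reduces everything to the PDE of \lref{;lpdej}, and that PDE is proved via the scaling $J(s,x)=x^{p}V(s/x^{q})$, a Laplace-transform identification of $V$ with a Bessel-kernel representation (\lref{;laltv}), the substitution $y=x^{q/2}/(q/2)$, and the kernels $g_{\la}$, with differentiation under the integral justified through \eqref{;genrel}.

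One correction to your guessed form: the representation you will land on is an ordinary single integral, not a Riemann--Liouville one,
\[
V(z)=C_{p,\nub}'\int_0^z du\int_0^\infty d\la\,\la^{(2p-1)\nub+1}K_{\nub}(\la)e^{-q^2u\la^2/8}.
\]
The factor $\Gamma(1-p\nub)\al^{p\nub-1}$ coming from $(z-v)_+^{-p\nub}$ combines with the powers of $\al$ produced by \eqref{;reprk2} and the rescaling $\la\mapsto\la/\sqrt{\al}$. What remains is exactly $\al^{-1}(\al+q^2\la^2/8)^{-1}$ under the $\la$-integral, which is the Laplace transform of this single integral. A plain $\int_0^z du$ is also precisely what your telescoping step in $s$ requires, namely $\int_0^s \partial_u g_{\la}\,du = g_{\la}(s,y)-g_{\la}(0,y)$ with evaluation at $u=0$.
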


Once we have the above proposition at our disposal, then the 
theorem follows from the following estimate on the function $J$: 

\begin{lem}\label{;lh}
We have 
\begin{align}\label{;lhq}
 J(s,x)\le \frac{2\Gamma (1+1/q)}{p\Gamma (1+1/q-p/q)}
 \!\left( 
 \frac{q^{2}}{2}
 \right) ^{p/q}\!s^{p/q}
\end{align}
for all $s,x\ge 0$.
\end{lem}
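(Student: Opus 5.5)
The plan is to show that $J(s,x)$ is maximized at $x=0$ and that $J(s,0)$ equals the right-hand side of \eqref{;lhq} exactly, so \eqref{;lhq} reduces to a closed-form evaluation.

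\textbf{Step 1: reduction to $x=0$.} Since $\nub =-1/q<0$, the exponent $-p\nub =p/q$ is positive. Hence for fixed $s$ and $v$, the map $x\mapsto \{(s-x^{q}v)_{+}\}^{p/q}$ is nonincreasing on $[0,\infty)$ and bounded above by $s^{p/q}$. All other factors in the integrand of \eqref{;defj} are nonnegative, and $C_{p,\nub }>0$ because each Gamma argument in \eqref{;cpnub} is positive: $1-p\nub =1+p/q$, $1+p\nub =1-p/q\in(0,1)$, and $1+(p-1)\nub =1+1/q-p/q>0$. Therefore
\begin{align*}
 J(s,x)\le J(s,0)=C_{p,\nub }\,s^{p/q}\int _{0}^{\infty }dv\,v^{\nub -1}\int _{0}^{\infty }d\la \,
 \frac{\la ^{2(p-1)\nub +1}}{1+\frac{q^{2}\la ^{2}}{8}}\exp \!\left( -\frac{\la ^{2}}{4v}\right) ,
\end{align*}
and it remains to show that the double integral is finite and to compute it.

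\textbf{Step 2: the $v$-integral.} By Fubini's theorem (everything is nonnegative) I integrate in $v$ first. The substitution $w=\la ^{2}/(4v)$ gives
\begin{align*}
 \int _{0}^{\infty }dv\,v^{\nub -1}e^{-\la ^{2}/(4v)}=\left( \frac{\la ^{2}}{4}\right) ^{\nub }\Gamma (-\nub )=4^{1/q}\la ^{-2/q}\,\Gamma (1/q),
\end{align*}
which is finite since $-\nub =1/q>0$.

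\textbf{Step 3: the $\la$-integral.} After Step 2 the remaining integrand is $\la ^{1-2p/q}/(1+q^{2}\la ^{2}/8)$. This is integrable at $0$ because $1-2p/q>-1$, and at $\infty$ because it decays like $\la ^{-1-2p/q}$. The substitution $\mu =q^{2}\la ^{2}/8$ turns it into
\begin{align*}
 \frac{4}{q^{2}}\left( \frac{q^{2}}{8}\right) ^{p/q}\int _{0}^{\infty }\frac{\mu ^{-p/q}}{1+\mu }\,d\mu
 =\frac{4}{q^{2}}\left( \frac{q^{2}}{8}\right) ^{p/q}\frac{\pi }{\sin (\pi p/q)}.
\end{align*}

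\textbf{Step 4: bookkeeping of constants.} This is the only place where care is needed. Using \eqref{;g1} and $\Gamma(1+z)=z\Gamma(z)$,
\begin{align*}
 \Gamma (1-p\nub )\Gamma (1+p\nub )=\frac{p}{q}\cdot\frac{\pi }{\sin (\pi p/q)}.
\end{align*}
So the factor $\pi/\sin(\pi p/q)$ from Step 3 cancels against $C_{p,\nub }$, leaving the factor $q/p$. Also $\Gamma (1/q)=q\Gamma (1+1/q)$, and $C_{p,\nub }$ contributes the denominator $\Gamma (1+1/q-p/q)$.

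Collecting the powers of $2$ gives
\begin{align*}
 2^{2(p-1)/q-1}\cdot 2^{2/q}\cdot 2^{2}\cdot 2^{-3p/q}=2^{1-p/q}.
\end{align*}
Collecting the powers of $q$ gives $q^{-2}\cdot q^{2p/q}\cdot q\cdot q=q^{2p/q}$. Combining,
\begin{align*}
 J(s,0)=\frac{2}{p}\left( \frac{q^{2}}{2}\right) ^{p/q}\frac{\Gamma (1+1/q)}{\Gamma (1+1/q-p/q)}\,s^{p/q}.
\end{align*}
This is exactly the right-hand side of \eqref{;lhq}, so \eqref{;lhq} holds, with equality at $x=0$.
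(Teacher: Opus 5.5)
Your proof is correct and follows essentially the same route as the paper. The paper also bounds $\{(s-x^{q}v)_{+}\}^{p/q}$ by $s^{p/q}$, integrates out $v$ via $v=\la^{2}/(4u)$, evaluates the $\la$-integral via $\eta=q^{2}\la^{2}/8$ and the beta integral, and matches constants using the reflection formula. Your remark that the bound is attained at $x=0$ is accurate and costs nothing extra.
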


Deferring the proofs of \pref{;ph} and \lref{;lh} to \sssref{;ssprfph} and 
\ref{;ssprflh}, respectively, we prove \tref{;th}.

\begin{proof}[Proof of \tref{;th}]
Since $0<p/q<1$ by assumption, we see that the right-hand side of \eqref{;phq}
is dominated by 
\begin{align*}
 \frac{2\Gamma (1+1/q)}{p\Gamma (1+1/q-p/q)}
 \!\left( 
 \frac{q^{2}}{2}
 \right) ^{p/q}
 \!\left( 
 \int _{0}^{t}ds\,\ex \!\left[ 
 M_{s}^{q-2}|\dv _{s}|^{2}
 \right] 
 \right) ^{p/q}
\end{align*}
by \lref{;lh} and Jensen's inequality together with Fubini's theorem. 
We put $t=1$. In the same way as in the verification of \eqref{;pppfq}, 
we have 
\begin{align*}
 \int _{0}^{1}ds\,\ex \!\left[ 
 M_{s}^{q-2}|\dv _{s}|^{2}
 \right] =\frac{2}{q}I_{q}(f)
\end{align*}
by \lref{;lpre1}, which holds true with $q$ replaced by $p$ as well. Combining 
these, we reach the conclusion.
\end{proof}

\subsection{Proof of \pref{;ph}}\label{;ssprfph}

This subsection is devoted to the proof of \pref{;ph}, which proceeds 
almost identically to that of \pref{;pp}.

Define $V\colon [0,\infty )\to [0,\infty )$ by 
\begin{align}\label{;defv}
 V(z):=J(z,1),\quad z\ge 0.
\end{align}
Then, $J$ is expressed as 
\begin{align}\label{;rjv}
 J(s,x)=x^{p}V(s/x^{q})
\end{align}
for any $s\ge 0$ and $x>0$. We shall see that, as does the function $U$ in \ssref{;sscp}, 
the function $V$ admits an alternative integral representation in terms of 
the modified Bessel function of the third kind.

\begin{lem}\label{;laltv}
We have 
\begin{align}\label{;laltvq}
 V(z)=C_{p,\nub }'\int _{0}^{z}du\,\int _{0}^{\infty }d\la \,
 \la ^{(2p-1)\nub +1}K_{\nub }(\la )\exp \!\left( 
 -\frac{q^{2}u}{8}\la ^{2}
 \right) 
\end{align}
for all $z\ge 0$. Here the constant $C_{p,\nub }'$ is given by 
\begin{align}\label{;cpnubd}
 C_{p,\nub }'=\frac{1}{
 2^{(2p-1)\nub }\Gamma (1+p\nub )\Gamma (1+(p-1)\nub )
 }.
\end{align}
\end{lem}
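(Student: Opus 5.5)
I will follow the proof of \lref{;laltu} and compare Laplace transforms. Write $\tv (z)$ for the right-hand side of \eqref{;laltvq}; both $\tv$ and $V$ vanish at $0$ and grow at most polynomially. It then suffices to show
\begin{align*}
 \int _{0}^{\infty }dz\,e^{-\al z}\tv (z)=\int _{0}^{\infty }dz\,e^{-\al z}V(z)\quad \text{for all }\al >0 ,
\end{align*}
since both functions are continuous.

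For the left-hand side, $\tv $ is a single integral in $z$, so integration by parts gives $\al ^{-1}\int e^{-\al z}\tv '(z)\,dz$. Fubini then yields
\begin{align*}
 \frac{C_{p,\nub }'}{\al }\int _{0}^{\infty }d\la \,
 \frac{\la ^{(2p-1)\nub +1}K_{\nub }(\la )}{\al +\frac{q^{2}}{8}\la ^{2}} .
\end{align*}
Integrability near $0$ holds because $(2p-1)\nub +1-|\nub |=2p\nub +1=1-2p/q>-1$. Next I rescale $\la \mapsto \sqrt{\al }\la $ and insert the representation \eqref{;reprk2} in the form \eqref{;reprk2d}, now with $\nub $ in place of $\nua $. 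The powers of $\la $ collapse to $\la ^{2(p-1)\nub +1}$, and Fubini produces an expression of the form
\begin{align*}
 c\,\al ^{\beta }\int _{0}^{\infty }dv\,e^{-\al v}v^{\nub -1}\int _{0}^{\infty }d\la \,\frac{\la ^{2(p-1)\nub +1}}{1+\frac{q^{2}}{8}\la ^{2}}\exp \!\left( -\frac{\la ^{2}}{4v}\right)
\end{align*}
with explicit $c$ and $\beta $. This inner $\la$-integral is exactly the one appearing in \eqref{;defj}.

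For the right-hand side, I take \eqref{;defj} with $x=1$. The $z$-integral $\int _{v}^{\infty }e^{-\al z}(z-v)^{-p\nub }\,dz=\Gamma (1-p\nub )\al ^{p\nub -1}e^{-\al v}$ is a Gamma integral, valid since $-p\nub >-1$. This gives
\begin{align*}
 C_{p,\nub }\Gamma (1-p\nub )\al ^{p\nub -1}\int _{0}^{\infty }dv\,e^{-\al v}v^{\nub -1}(\text{same inner integral}).
\end{align*}
Unlike in \lref{;laltu}, no auxiliary identity like \eqref{;inrd} should be needed, because the inner kernel of $J$ was chosen to be that integral directly. It therefore remains to match the power of $\al $ and the constants.

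The main obstacle I expect is the bookkeeping of these exponents and constants. For the power of $\al $, the left side gives $\al ^{-1}\cdot \al ^{-1}$ from pulling $\al $ out of the denominator, times $\al ^{-((2p-1)\nub +2)/2}$ from the rescaling, times $\al ^{\nub /2}$ from \eqref{;reprk2d}. The $v$-substitution hidden in \eqref{;reprk2d}, namely $\la ^{2}/(4v)$ after rescaling, contributes a further power, and I must verify that the total equals $p\nub -1$. For the constants, the left side carries $2^{\nub -1}C_{p,\nub }'$ and the right side $C_{p,\nub }\Gamma (1-p\nub )$. By \eqref{;cpnub} and \eqref{;cpnubd} the right side equals $2^{-2(p-1)\nub -1}/(\Gamma (1+p\nub )\Gamma (1+(p-1)\nub ))$ and the left side $2^{\nub -1-(2p-1)\nub }/(\Gamma (1+p\nub )\Gamma (1+(p-1)\nub ))$. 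These have the same exponent of $2$, namely $-2(p-1)\nub -1$, so the constants match. All Fubini steps are justified by positivity of the integrands.
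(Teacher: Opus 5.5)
Your proposal is essentially the paper's proof: you compute the Laplace transform of $\tv$ via $\tv'$, rescale, insert \eqref{;reprk2d} with $\nub$ in place of $\nua$, and match the result against the Laplace transform of the convolution defining $V$. The paper phrases that last step by writing $\al^{p\nub-1}$ as the Laplace transform of $z^{-p\nub}/\Gamma(1-p\nub)$, and your constant check agrees with the paper's.

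One bookkeeping slip: the substitution $\la=\sqrt{\al}\mu$ contributes $\al^{+((2p-1)\nub+2)/2}$, not $\al^{-((2p-1)\nub+2)/2}$. Also, \eqref{;reprk2d} contributes only $\al^{\nub/2}$, with no further hidden power from the $v$-substitution. So the total is $\al^{-2+((2p-1)\nub+2)/2+\nub/2}=\al^{p\nub-1}$ directly.
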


Since $p/q<1$, we are allowed to take $\kappa =(2p-1)\nub +1$ and 
$\mu =\nub $ in \eqref{;genrel} to see that 
\begin{align}\label{;cpnubdd}
 C_{p,\nub }'\int _{0}^{\infty }d\la \,\la ^{(2p-1)\nub +1}K_{\nub }(\la )=1.
\end{align}

\begin{proof}[Proof of \lref{;laltv}]
We write $\tv (z)$ for the right-hand side of \eqref{;laltvq} for $z\ge 0$ 
and aim to show that, for every fixed $\al >0$, 
\begin{align}\label{;ltss}
 \int _{0}^{\infty }dz\,e^{-\al z}\tv (z)=\int _{0}^{\infty }dz\,e^{-\al z}V(z).
\end{align}
The left-hand side is equal to 
\begin{align*}
 \frac{1}{\al }\int _{0}^{\infty }dz\,e^{-\al z}\tv '(z)
 &=\frac{C_{p,\nub }'}{\al }\int _{0}^{\infty }d\la \,
 \frac{\la ^{(2p-1)\nub +1}K_{\nub }(\la )}{
 \al +\frac{q^{2}}{8}\la ^{2}
 }\\
 &=C_{p,\nub }'\al ^{(p-1/2)\nub -1}
 \int _{0}^{\infty }d\la \,
 \frac{\la ^{(2p-1)\nub +1}K_{\nub }(\sqrt{\al }\la )}{
 1+\frac{q^{2}}{8}\la ^{2}
 },
\end{align*}
where the first line is due to Fubini's theorem and the second to 
a change of variables. Into the last expression, we plug \eqref{;reprk2d} 
with $\nua $ therein replaced by $\nub $ to obtain 
\begin{align*}
 &\int _{0}^{\infty }dz\,e^{-\al z}\tv (z)\\
 &=2^{\nub -1}C_{p,\nub }'\al ^{p\nub -1}\int _{0}^{\infty }dv\,
 e^{-\al v}v^{\nub -1}\int _{0}^{\infty }d\la \,
 \frac{\la ^{2(p-1)\nub +1}}{
 1+\frac{q^{2}}{8}\la ^{2}
 }\exp \!\left( 
 -\frac{\la ^{2}}{4v}
 \right) 
\end{align*}
by Fubini's theorem. Note that, in the last expression,  
\begin{align*}
 \al ^{p\nub -1}
 =\frac{1}{\Gamma (1-p\nub )}\int _{0}^{\infty }dz\,e^{-\al z}z^{-p\nub }.
\end{align*}
In view of the definitions~\eqref{;defv} and \eqref{;defj} of $V$ and $J$, 
respectively, claim~\eqref{;ltss} follows by seeing that 
\begin{align*}
 \frac{2^{\nub -1}C_{p,\nub }'}{\Gamma (1-p\nub )}=C_{p,\nub },
\end{align*}
which is the case by the definitions~\eqref{;cpnubd} and \eqref{;cpnub} 
of $C_{p,\nub }'$ and $C_{p,\nub }$, respectively. Therefore the lemma 
is proven.
\end{proof}

\begin{rem}\label{;rbdv}
Relation~\eqref{;cpnubdd} together with \lref{;laltv} entails that 
\begin{align}\label{;bdv}
 V(z)\le z\quad \text{for all $z\ge 0$}.
\end{align}
We will return to the above fact at the end of this section in \rref{;rcons}.
\end{rem}

By the above lemma, we are able to prove a similar assertion to \lref{;lpdefp} 
as to the function $J$.

\begin{lem}\label{;lpdej}
The function $J$ satisfies the equation 
\begin{align}\label{;pdej}
 \frac{\partial J}{\partial s}(s,x)x^{q-2}+\frac{1}{2}\frac{\partial ^{2}J}{\partial x^{2}}(s,x)
 =x^{p-2},\quad s,x>0,
\end{align}
with initial condition $J(0,x)=0$ for $x>0$.
\end{lem}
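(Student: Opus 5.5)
We follow the proof of \lref{;lpdefp}. The initial condition is immediate: $J(0,x)=0$ because $(0-x^{q}v)_{+}=0$ for every $v\ge0$, or directly from \eqref{;rjv} and \lref{;laltv}, since $V(0)=0$.

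For the equation, we change variables so that the $x$-part becomes a Bessel-type operator. Set $y=\frac{2}{q}x^{q/2}$ and write $J(s,x)=G(s,y)$. Since $dy/dx=x^{q/2-1}$, we have
\begin{align*}
\partial_{x}^{2}J=x^{q-2}\Bigl(\partial_{y}^{2}G+\frac{(q-2)/2}{(q/2)y}\partial_{y}G\Bigr)=x^{q-2}\Bigl(\partial_{y}^{2}G+\frac{2\nub+1}{y}\partial_{y}G\Bigr),
\end{align*}
because $1-2/q=2\nub+1$. Dividing \eqref{;pdej} by $x^{q-2}$, it therefore suffices to show
\begin{align*}
\partial_{s}G+\frac{1}{2}\partial_{y}^{2}G+\frac{2\nub+1}{2y}\partial_{y}G=x^{p-q}=c\,y^{2(p-q)/q}=c\,y^{-2\nub(q-p)}
\end{align*}
for an explicit constant $c$, namely $c=(q/2)^{2(p-q)/q}$.

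Next we use \eqref{;rjv} together with \lref{;laltv}. Substituting $\la\mapsto x^{q/2}\la$ in the representation of $V(s/x^{q})$ and changing variables in $u$ should give
\begin{align*}
G(s,y)=c'\,y^{-\nub}\int_{0}^{s}du\int_{0}^{\infty}d\la\,\la^{\al}K_{\nub}(y\la)\exp\Bigl(-\frac{u}{2}\la^{2}\Bigr)
\end{align*}
for a suitable exponent $\al$ and constant $c'$. This requires tracking powers: the factor $x^{p}$ and the Jacobians must combine into $y^{-\nub}$ times a power of $\la$. With $\al$ determined this way, each $g_{\la}(u,y)=y^{-\nub}K_{\nub}(\la y)e^{-\la^{2}u/2}$ solves the homogeneous PDE \eqref{;pdegla} with $\nua$ replaced by $\nub$, by \eqref{;odek}.

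Differentiating under the integral, the spatial part equals $-\int_{0}^{s}du\,\partial_{u}(\cdots)$. Since $G$ now carries a single $u$-integral rather than a double one, adding $\partial_{s}G$ gives
\begin{align*}
\partial_{s}G+\frac{1}{2}\partial_{y}^{2}G+\frac{2\nub+1}{2y}\partial_{y}G=c'\int_{0}^{\infty}d\la\,\la^{\al}g_{\la}(0,y).
\end{align*}
Scaling $\la\mapsto\la/y$ turns this into
\begin{align*}
c'\,y^{-\nub-\al-1}\int_{0}^{\infty}\la^{\al}K_{\nub}(\la)\,d\la.
\end{align*}
By \eqref{;cpnubdd} this is a constant times $y^{-\nub-\al-1}$, and we must check that the exponent and constant match $c\,y^{-2\nub(q-p)}$. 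If $\al$ comes out as $(2p-1)\nub+1$, the exponent is $-2p\nub-2$, and since $2+2p\nub=2(q-p)/q\cdot(-\nub q)\cdot(-1)$, this agrees. This bookkeeping of exponents and constants is routine but error-prone, and it is where we expect most care to be needed.

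The main obstacle is justifying the differentiation under the integral in $y$, locally uniformly on $a<y<b$, as in \eqref{;verif}. As in the proof of \lref{;lpdefp}, we bound the first and second $y$-derivatives of $y^{-\nub}K_{\nub}(\la y)$ using $\frac{d}{dz}\{z^{-\mu}K_{\mu}(z)\}=-z^{-\mu}K_{\mu+1}(z)$ and \eqref{;odek}. The monotonicity of $z^{-\mu}K_{\mu}(z)$ then dominates them by
\begin{align*}
\la^{\al+2}(a\la)^{-\nub}K_{\nub}(a\la)\quad\text{and}\quad\la^{\al+2}(a\la)^{-\nub-1}K_{\nub+1}(a\la).
\end{align*}
Their integrability follows from \eqref{;genrel}: we need $\kappa-|\mu|>-1$ for $(\kappa,\mu)=(\al+2,\nub)$ and $(\al+1-\nub,\nub+1)$. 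With $\nub=-1/q\in(-\infty,0)$ and $\al=(2p-1)\nub+1$, the first condition reduces to $2+2p\nub>0$, i.e.\ $p<q$. The second condition may need an extra case check when $|\nub+1|$ changes sign, i.e.\ when $q<1$. If it fails near $\la=0$, we would instead use the factor $e^{-u\la^{2}/2}$ only at large $\la$, and at small $\la$ use the small-argument asymptotics of $K_{\mu}$ directly, which are harmless since $y\ge a>0$ keeps $\la$-powers controlled.
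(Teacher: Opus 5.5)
Your strategy is the paper's own. It consists of the substitution $y=\frac{2}{q}x^{q/2}$ producing the Bessel-type operator of index $\nub$, the representation of $G$ from \lref{;laltv} and \eqref{;rjv}, the fact that each $g_{\la}$ solves the homogeneous equation, and domination of the $y$-derivatives via $\frac{d}{dz}\{z^{-\mu}K_{\mu}(z)\}=-z^{-\mu}K_{\mu+1}(z)$ and \eqref{;genrel}. However, the two places where real verification is needed are left open or done incorrectly.

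First, the bookkeeping. Substituting $u\mapsto u/x^{q}$ and $\la\mapsto y\la$ with $x^{q}=q^{2}y^{2}/4$ gives $\al=(2p-1)\nub+1$ and $c'=C'_{p,\nub}(q/2)^{2(p-q)/q}$. By \eqref{;cpnubdd} the constant on the right is then exactly $c$; you said this had to be checked but never checked it. The exponent check is simply $-2p\nub-2=2(p-q)/q$. Your $y^{-2\nub(q-p)}$ and your displayed identity for $2+2p\nub$ both have the wrong sign.

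Second, and more substantively, your dominating functions lose the factor $\la^{\nub}$. Since $\frac{d}{dy}\{y^{-\nub}K_{\nub}(\la y)\}=-\la^{\nub+2}y(\la y)^{-\nub-1}K_{\nub+1}(\la y)$, after multiplying by the weight $\la^{\al}$ you must show integrability of $a^{-\nub}\la^{\al+2}K_{\nub}(a\la)$ and $a^{-\nub-1}b\,\la^{\al+1}K_{\nub+1}(a\la)$. That is, \eqref{;genrel} is needed with $(\kappa,\mu)=(\al+2,\nub)$ and $(\al+1,\nub+1)$, not $(\al+1-\nub,\nub+1)$.

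The first condition reads $2p\nub+4>0$, i.e.\ $p<2q$. Your $2+2p\nub>0$ suffices, but it is the condition behind \eqref{;cpnubdd}, not this one. The second condition, which you left unresolved, holds for both $q\ge 1$ and $q<1$: $(\al+1)-|\nub+1|+1=\frac{1}{q}(-2p+1+3q-|q-1|)>\frac{1}{q}(q+1-|q-1|)>0$, because $p<q$.

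Your proposed fallback would not have rescued a failure here. Near $\la=0$ the factor $e^{-u\la^{2}/2}$ gives nothing, and for fixed $a>0$ the function $K_{\mu}(a\la)$ blows up like $(a\la)^{-|\mu|}$ (logarithmically if $\mu=0$). So $\kappa-|\mu|>-1$ is exactly what decides integrability there, and the restriction $y\ge a$ does not help.

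With these corrections your argument is complete and coincides with the paper's.
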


\begin{proof}
By definition~\eqref{;defj}, it is clear that $J$ fulfills the initial condition. 
In order to prove \eqref{;pdej}, by defining the function 
$G\colon [0,\infty )\times [0,\infty )\to [0,\infty )$ through 
\begin{align*}
 J(s,x)=G\!\left( 
 s,\frac{x^{q/2}}{q/2}
 \right) ,\quad s,x\ge 0,
\end{align*}
it suffices to show that $G$ satisfies the equation 
\begin{align}\label{;pdegd}
 \frac{\partial G}{\partial s}(s,y)+\frac{1}{2}\frac{\partial ^{2}G}{\partial y^{2}}(s,y)
 +\frac{2\nub +1}{2y}\frac{\partial G}{\partial y}(s,y)
 =C_{p,\nub }''y^{-2p\nub -2},\quad 
 s,y>0,
\end{align}
with $C_{p,\nub }'':=(2|\nub |)^{2p\nub +2}$. By noting \eqref{;rjv}, and 
by the repeated use of change of variables, we know from 
\lref{;laltv} that $G$ is expressed as 
\begin{align*}
 G(s,y)=C_{p,\nub }'C_{p,\nub }''y^{-\nub }
 \int _{0}^{s}du\int _{0}^{\infty }d\la \,
 \la ^{(2p-1)\nub +1}K_{\nub }(y\la )\exp \!\left( 
 -\frac{u}{2}\la ^{2}
 \right) 
\end{align*}
for all $s\ge 0$ and $y>0$. For each $\la >0$, now we define $g_{\la }$ by 
\eqref{;defgla} with $\nua $ therein replaced by $\nub $. For every $s,y>0$, 
if we prove that 
\begin{align*}
 \frac{\partial G}{\partial y}(s,y)
 &=C_{p,\nub }'C_{p,\nub }''\int _{0}^{s}du\int _{0}^{\infty }d\la \,\la ^{(2p-1)\nub +1}
 \frac{\partial g_{\la }}{\partial y}(u,y),\\
 \frac{\partial ^{2}G}{\partial y^{2}}(s,y)
 &=C_{p,\nub }'C_{p,\nub }''\int _{0}^{s}du\int _{0}^{\infty }d\la \,\la ^{(2p-1)\nub +1}
 \frac{\partial ^{2}g_{\la }}{\partial y^{2}}(u,y),
\end{align*}
then, replacing $\nua $ by $\nub $ in \eqref{;pdegla}, we see that the left-hand side 
of \eqref{;pdegd} agrees with 
\begin{align*}
 &C_{p,\nub }'C_{p,\nub }''\int _{0}^{\infty }d\la \,\la ^{(2p-1)\nub +1}g_{\la }(s,y)
 -C_{p,\nub }'C_{p,\nub }''\int _{0}^{s}du\int _{0}^{\infty }d\la \,\la ^{(2p-1)\nub +1}
 \frac{\partial g_{\la }}{\partial u}(u,y)\\
 &=C_{p,\nub }'C_{p,\nub }''\int _{0}^{\infty }d\la \,\la ^{(2p-1)\nub +1}g_{\la }(0,y)\\
 &=C_{p,\nub }'C_{p,\nub }''y^{-2p\nub -2}\!\int _{0}^{\infty }d\la \,\la ^{(2p-1)\nub +1}K_{\nub }(\la ),
\end{align*}
which is the right-hand side of \eqref{;pdegd} by relation~\eqref{;cpnubdd}. Here we have 
used Fubini's theorem for the second line and made a change of variables for the third. 
Now we repeat the same argument as in the proof of \lref{;lpdefp} to see that the 
proof is reduced to showing that, for every fixed $s>0$ and $0<a<b$, 
\begin{align}\label{;int12d}
 \int _{0}^{s}du\int _{0}^{\infty }d\la \,\la ^{(2p-1)\nub +1}k_{i}(\la )
 \exp \left( 
 -\frac{u}{2}\la ^{2}
 \right) <\infty ,\quad i=1,2,
\end{align}
where $k_{1}$ and $k_{2}$ are defined in the same way as in \eqref{;k12} with 
$\nub $ replacing $\nua $ therein. Claim~\eqref{;int12d} follows once we show 
both of 
\begin{align*}
 \int _{0}^{\infty }d\la \,\la ^{(2p-1)\nub +2}K_{\nub +1}(a\la )<\infty , 
 && \int _{0}^{\infty }d\la \,\la ^{(2p-1)\nub +3}K_{\nub }(a\la )<\infty ,
\end{align*}
which are the case in view of \eqref{;genrel}; indeed, for the former, 
\begin{align*}
 (2p-1)\nub +2-|\nub +1|+1
 &=\frac{1}{q}\!\left( 
 -2p+1+3q-|1-q|
 \right) \\
 &>\frac{1}{q}\!\left( 
 q+1-|q-1|
 \right) >0,
\end{align*}
as well as for the latter, 
\begin{align*}
 (2p-1)\nub +3-|\nub |+1
 &=\frac{2}{q}(-p+2q)>0,
\end{align*}
by the definition~\eqref{;defnub} of $\nub $ and the assumption that $0<p<q$. 
This ends the proof of the lemma.
\end{proof}

With \lref{;lpdej} at disposal, the proof of \pref{;ph} proceeds along the same 
lines as in that of \pref{;pp}. 

\begin{proof}[Proof of \pref{;ph}]
Set the additive functional $\{ A_{t}\} _{0\le t\le 1}$ by 
\begin{align*}
 A_{t}=\int _{0}^{t}M_{s}^{q-2}|\dv _{s}|^{2}\,ds, 
\end{align*}
and apply It\^o's formula to the process 
$\{ J(A_{t},M_{t})\} _{0\le t\le 1}$ to see that, in differential notation,  
\begin{align*}
 &dJ(A_{t},M_{t})\\
 &=\frac{\partial J}{\partial s}(A_{t},M_{t})M_{t}^{q-2}|\dv _{t}|^{2}\,dt
 +\frac{\partial J}{\partial x}(A_{t},M_{t})\dv _{t}\cdot d\br _{t}
 +\frac{1}{2}\frac{\partial ^{2}J}{\partial x^{2}}(A_{t},M_{t})|\dv _{t}|^{2}\,dt\\
 &=\frac{\partial J}{\partial x}(A_{t},M_{t})\dv _{t}\cdot d\br _{t}
 +M_{t}^{p-2}|\dv _{t}|^{2}\,dt
\end{align*}
with initial value $J(A_{0},M_{0})=0$ a.s., thanks to \lref{;lpdej}. 
By taking the expectation, the assertion of the proposition 
follows readily.
\end{proof}

\subsection{Proof of \lref{;lh}}\label{;ssprflh}

In this subsection, we prove \lref{;lh} to complete the proof of \tref{;th}.

\begin{proof}[Proof of \lref{;lh}]
Fix $s,x\ge 0$ arbitrarily. By the definition~\eqref{;defj} of $J$, we see that 
$J(s,x)$ is dominated by 
\begin{align}\label{;prflhq}
 C_{p,\nub }s^{-p\nub }\!\int _{0}^{\infty }d\la \,
 \frac{\la ^{2(p-1)\nub +1}}{1+\frac{q^{2}\la ^{2}}{8}}\int _{0}^{\infty }\frac{dv}{v}\,
 v^{\nub }\exp \!\left( 
 -\frac{\la ^{2}}{4v}
 \right) ,
\end{align}
owing to Fubini's theorem. The integral with respect to $v$ is computed as 
\begin{align*}
 \int _{0}^{\infty }\frac{du}{u}\left( 
 \frac{\la ^{2}}{4u}
 \right) ^{\nub }\!e^{-u}=\frac{\Gamma (-\nub )}{2^{2\nub }}\la ^{2\nub }
\end{align*}
by changing the variables with $v=\la ^{2}/(4u)$. Plugging the last expression, 
we see that \eqref{;prflhq} turns into 
\begin{align*}
 &\frac{\Gamma (-\nub )}{2^{2\nub }}C_{p,\nub }s^{-p\nub }
 \int _{0}^{\infty }d\la \,\frac{\la ^{2p\nub +1}}{1+\frac{q^{2}\la ^{2}}{8}}\\
 &=\frac{\Gamma (-\nub )}{2^{2\nub }}C_{p,\nub }\!\left( 
 \frac{2\sqrt{2}}{q}
 \right) ^{2p\nub +1}\!\frac{\sqrt{2}}{q}s^{-p\nub }\int _{0}^{\infty }d\eta \,
 \frac{\eta ^{p\nub }}{1+\eta }\\
 &=2^{2(p-1)\nub +1}C_{p,\nub }\!\left( 
 \frac{2}{q^{2}}
 \right) ^{p\nub +1}\!\Gamma (-\nub )\Gamma (1+p\nub )\Gamma (-p\nub )s^{-p\nub },
\end{align*}
where we have changed the variables with $q^{2}\la ^{2}/8=\eta $ for the 
second line and used an alternative integral representation 
(see \cite[Equation~\thetag{1.5.3}]{leb}) of the beta function for the third. 
By the definitions~\eqref{;defnub} and \eqref{;cpnub} of $\nub $ and $C_{p,\nub }$, 
respectively, the last expression is seen to agree with the right-hand side of the 
claimed inequality~\eqref{;lhq}.
\end{proof}

Since the process $\{ M_{t}^{p-q}\} _{0\le t\le 1}$ is a submartingale, we 
see that, for every $0\le t\le 1$, 
\begin{align}\label{;bdvcons}
 \int _{0}^{t}ds\,\ex \!\left[ 
 M_{s}^{p-2}|\dv _{s}|^{2}
 \right] \le 
 \ex \!\left[ 
 M_{t}^{p-q}\int _{0}^{t}ds\,M_{s}^{q-2}|\dv _{s}|^{2}
 \right] .
\end{align}
Indeed, by Fubini's theorem, the right-hand side is equal to 
\begin{align*}
 \int _{0}^{t}ds\,\ex \!\left[ 
 M_{t}^{p-q}M_{s}^{q-2}|\dv _{s}|^{2}
 \right] 
 =\int _{0}^{t}ds\,\ex \!\left[ 
 M_{s}^{q-2}|\dv _{s}|^{2}\ex \!\left[ 
 M_{t}^{p-q}\rmid| \F _{s}
 \right] 
 \right] ,
\end{align*}
which is not less than the left-hand side. We end this section with 
a remark that the bound~\eqref{;bdv} on the function $V$ as 
observed in \rref{;rbdv} is consistent with \eqref{;bdvcons}.

\begin{rem}\label{;rcons}
By \eqref{;bdv} together with relation~\eqref{;rjv}, we have another 
bound on the function $J$: 
\begin{align*}
 J(s,x)\le x^{p-q}s
\end{align*}
for all $s\ge 0$ and $x>0$, which implies \eqref{;bdvcons} by 
virtue of \pref{;ph}.
\end{rem}

\section{Concluding remarks}\label{;scr}
In this paper, we have seen how a stochastic argument works 
effectively in the analysis of some functional inequalities. We conclude 
this paper with another instance of the effectiveness of the stochastic 
method we have employed. 

Pick $f\in \ccp $ arbitrarily. We retain the same notation as in \sref{;spre}. 
Define the $\R ^{\D \times \D }$-valued process 
$a=\bigl\{ a_{t}=(a^{ij}_{t})_{i,j=1}^{\D }\bigr\} _{0\le t\le 1}$ by 
\begin{align*}
 a^{ij}_{t}=\ex \!\left[ 
 \partial _{j}\partial _{i}f(\br _{1-t}+x)
 \right] \!\big| _{x=\br _{t}},\quad i,j=1,\ldots ,\D .
\end{align*}
We write $\ba ^{i}_{t}=(a^{ij}_{t})_{j=1}^{\D },\,0\le t\le 1$, for $i=1,\ldots ,\D $ 
below. The same as $\dv $, each component of $a$ 
is a continuous $\{ \F _{t}\} $-martingale as well. 
For every $i,j=1,\ldots ,\D $, by the boundedness of 
$\partial _{j}\partial _{i}f$, we see that 
\begin{align*}
 \ex \!\left[ 
 \partial _{j}\partial _{i}f(\br _{1-t}+x)
 \right] &=\partial _{j}\partial _{i}
 \ex \!\left[ 
 f(\br _{1-t}+x)
 \right] 
\end{align*}
for any $0\le t\le 1$ and $x\in \R ^{\D }$.
Then, in the same way as in relation~\eqref{;reldv}, the two processes 
$\dv $ and $a$ are related via 
\begin{align}
 \dv ^{i}_{t}=\ex \!\left[ \partial _{i}f(\br _{1})\right] 
 +\int _{0}^{t}\ba ^{i}_{s}\cdot d\br _{s},\quad &0\le t\le 1,\ i=1,\ldots ,\D ,\label{;rela}
\end{align}
$\pr $-a.s. It is readily seen from \eqref{;reldv} and \eqref{;rela} that 
\begin{align}\label{;qrpoi}
 \frac{d}{dt}\ex \!\left[ 
 M_{t}^{2}
 \right] &=\ex \!\left[ 
 |\dv _{t}|^{2}
 \right] , && 
 \frac{d}{dt}\ex \!\left[ 
 |\dv _{t}|^{2}
 \right] 
 =\sum _{i=1}^{\D }\ex \!\left[ 
 |\ba ^{i}_{t}|^{2}
 \right] 
 \equiv \sum _{i,j=1}^{\D }\ex \!\left[ 
 (a^{ij}_{t})^{2}
 \right] , 
\end{align}
for $0\le t\le 1$. For every $i,j=1,\ldots ,\D $, since the process 
$a^{ij}=\{ a^{ij}_{t}\} _{0\le t\le 1}$ is a martingale, its squared process 
is a submartingale. Therefore the function $\al $ 
defined by 
\begin{align}\label{;defal}
 \al (t)=\sum _{i,j=1}^{\D }\ex \!\left[ 
 (a^{ij}_{t})^{2}
 \right] ,\quad 0\le t\le 1,
\end{align}
is nondecreasing. 
Consequently, by the latter relation in \eqref{;qrpoi}, 
the function $\ex \!\left[ 
|\dv _{t}|^{2}
\right] ,\,0\le t\le 1$, is convex, from which we have 
\begin{align*}
 \ex \!\left[ 
 |\dv _{t}|^{2}
 \right] \le t\ex \!\left[ 
 |\dv _{1}|^{2}
 \right] +(1-t)|\dv _{0}|^{2}
\end{align*}
for all $0\le t\le 1$, and hence 
\begin{align*}
 \int _{0}^{1}\ex \!\left[ 
 |\dv _{t}|^{2}
 \right] dt
 \le \frac{1}{2}\left( 
 \ex \!\left[ 
 |\dv _{1}|^{2}
 \right] +|\dv _{0}|^{2}
 \right) 
\end{align*}
whose graphical interpretation is also possible. 
Combining the last inequality with the former relation in \eqref{;qrpoi} entails that 
\begin{align}\label{;crq1}
 \ex \!\left[ 
 M_{1}^{2}
 \right] -M_{0}^{2}\le \frac{1}{2}\left( 
 \ex \!\left[ 
 |\dv _{1}|^{2}
 \right] +|\dv _{0}|^{2}
 \right) ,
\end{align}
that is, 
\begin{align*}
 I_{2}(f)
 \le \frac{1}{2}\norm{|\nabla f|}{2}^{2}
 +\frac{1}{2}\left| 
 \int _{\R ^{\D }}\nabla f\,d\gss{\D }
 \right| ^{2},
\end{align*}
which is an improved Poincar\'e's inequality shown in 
\cite[Theorem~A.2]{gnp} based on the spectral decomposition of 
the Ornstein--Uhlenbeck operator in $\R ^{\D }$. Applying the 
same argument to the function $\al $, we see that it is also convex. 
Therefore, by the latter relation in \eqref{;qrpoi}, we have 
\begin{align*}
 \ex \!\left[ 
 |\dv _{t}|^{2}
 \right] -|\dv _{0}|^{2}
 \le \frac{1}{2}t\!\left( 
 \al (0)+\al (t)
 \right) 
\end{align*} 
for all $0\le t\le 1$. Noting that, thanks to the latter relation in \eqref{;qrpoi} 
and by the integration by parts formula,
\begin{align*}
 \int _{0}^{1}dt\,t\al (t)
 =\ex \!\left[ 
 |\dv _{1}|^{2}
 \right] -\int _{0}^{1}dt\,\ex \!\left[ 
 |\dv _{t}|^{2}
 \right] ,
\end{align*}
we integrate both sides of the last inequality with respect to $t$ over $[0,1]$ to get 
\begin{align*}
 \int _{0}^{1}dt\,\ex \!\left[ 
 |\dv _{t}|^{2}
 \right] -|\dv _{0}|^{2}
 \le \frac{1}{4}\al (0)
 +\frac{1}{2}\ex \!\left[ 
 |\dv _{1}|^{2}
 \right] -\frac{1}{2}\int _{0}^{1}dt\,\ex \!\left[ 
 |\dv _{t}|^{2}
 \right] .
\end{align*}
Rearranging terms and applying the former relation in \eqref{;qrpoi}, 
we arrive at a further improvement of Poincar\'e's inequality 
involving the second derivatives of $f$ as follows:
\begin{align}\label{;crq2}
 \ex \!\left[ 
 M_{1}^{2}
 \right] -M_{0}^{2}\le \frac{1}{3}
 \ex \!\left[ 
 |\dv _{1}|^{2}
 \right] +\frac{2}{3}|\dv _{0}|^{2}
 +\frac{1}{6}\al (0), 
\end{align}
namely, by the definition~\eqref{;defal} of $\al $,
\begin{align*}
 I_{2}(f)\le \frac{1}{3}\norm{|\nabla f|}{2}^{2}
 +\frac{2}{3}\left| 
 \int _{\R ^{\D }}\nabla f\,d\gss{\D }
 \right| ^{2}
 +\frac{1}{6}\sum _{i,j=1}^{\D }\left( 
 \int _{\R ^{\D }}\partial_{j}\partial _{i}f\,d\gd 
 \right) ^{2}.
\end{align*}
Inequality~\eqref{;crq1} is recovered by inserting the bound 
\begin{align*}
 \al (0)\le 
 \ex \!\left[ 
 |\dv _{1}|^{2}
 \right] -|\dv _{0}|^{2}
\end{align*}
into the right-hand side of \eqref{;crq2}. The last inequality is 
a consequence of the monotonicity of the function $\al $ 
together with the latter relation in \eqref{;qrpoi}.


\end{document}